\theoremstyle{plain}
\newtheorem{theorem}{Theorem}[section]
\newtheorem{lemma}[theorem]{Lemma}
\theoremstyle{definition}
\newtheorem{definition}[theorem]{Definition}
\theoremstyle{remark}
\newtheorem{remark}[theorem]{Remark}
\def \R{\mathbb{R}}
\def \Z{\mathbb{Z}}
\begin{document}

\title{A Homological Approach to Relative Knot Invariants}

\author{Georgi D. Gospodinov}

\address{Franklin W. Olin College of Engineering, 1000 Olin Way, Needham, MA 02492}

\email{georgi.gospodinov@olin.edu}

\keywords{Legendrian knots, transverse knots, relative invariants}

\begin{abstract}
We define relative versions of the classical invariants of Legendrian and transverse knots in contact 3-manifolds for knots that are homologous to a fixed reference knot. We show these invariants are well-defined and give some basic properties.
\end{abstract}

\maketitle

\section{Introduction}

A smooth oriented knot (an embedding of $S^1$) in a contact 3-manifold $(M,\xi)$ is called \textit{Legendrian} if it is everywhere tangent to the contact planes $\xi$ and {\em transverse} if it is everywhere transverse to $\xi$. Null-homologous Legendrian knots in a closed contact 3-manifold have two classical invariants, the Thurston-Bennequin invariant and the rotation number. Null-homologous transverse knots in a contact 3-manifold have a classical invariant, the self-linking number.

Our goal is to generalize these ideas to the case when $K$ is not null-homologous in $M$ (that is, $[K]\neq 0\in H_1(M;\Z)$), in particular, to the case when $K$ is homologous to another fixed knot $J$. We prove the following theorem. 

\begin{theorem}\label{main}
Consider homologous knots $K$ and $J$ in a contact 3-manifold $(M,\xi)$ with $K\cup J=\partial\Sigma$ for an embedded oriented surface $\Sigma$ and orient $K$ and $J$ as boundary components of $\Sigma$ so that $[\partial\Sigma]=[K]-[J]$.
\begin{enumerate}[(a)]
\item If $K$ and $J$ are Legendrian, then the relative Thurston-Bennequin number of $K$ relative to $J$ is well-defined.
\item If $K$ and $J$ are Legendrian, then the relative rotation number of $K$ relative to $J$ is well-defined up to an integer $d$ such that $d=c_1(\xi)([\Sigma])$, where $[\Sigma]\in H_2(M,K\cup J)$. For a tight contact 3-manifold $(M,\xi)$, this integer ambiguity vanishes so the relative rotation number is well-defined.
\item If $K$ and $J$ are transverse, then the relative self-linking number of $K$ relative to $J$ is well-defined up to an integer $d$ such that  $d=c_1(\xi)([\Sigma])$, where $[\Sigma]\in H_2(M,K\cup J)$. For a tight contact 3-manifold $(M,\xi)$, this integer ambiguity vanishes so the relative self-linking number is well-defined.
\end{enumerate}
\end{theorem}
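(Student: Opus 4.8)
\emph{Proof strategy.} The plan is to handle all three parts with one device: express each proposed relative invariant as a ``$K$--contribution'' minus a ``$J$--contribution'', and check that every change of auxiliary data alters the two contributions by amounts that cancel. The cancellation is forced by the two homological consequences of $K\cup J=\partial\Sigma$: that $[K]=[J]$ in $H_1(M;\Z)$ (the boundary of $\Sigma$ is null-homologous), and that $[K]=[J]$ in $H_1(\Sigma;\Z)$ (the class $[\partial\Sigma]$ is the image of the fundamental class of $(\Sigma,\partial\Sigma)$ and so dies in $H_1(\Sigma)$). Two preliminaries come first: since $\Sigma$ is a compact surface with nonempty boundary it is homotopy equivalent to a graph, so $\xi|_\Sigma$ is trivializable; and the nowhere-zero sections of an oriented rank-$2$ bundle over a circle form a torsor over $\Z$, so differences of framings of a knot and winding numbers of sections relative to a trivialization are honest integers. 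Throughout, $K$ and $J$ carry their boundary orientations, as in the statement.

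\emph{Part (a).} Define the relative Thurston--Bennequin number as the signed difference between the contact framing of $K$ and the $\Sigma$--surface framing of $K$, minus the analogous difference along $J$; equivalently, writing $K^{+},J^{+}$ for the contact push-offs, as $\mathrm{lk}\bigl(K^{+}\cup(-J^{+}),\,K\cup(-J)\bigr)$ (the two descriptions agree by computing the linking number with $\Sigma$, a Seifert surface for $K\cup(-J)$, and reading off the intersections near the boundary). The key point is that this is a linking number of a pair of \emph{null-homologous} oriented links, since $[K^{+}\cup(-J^{+})]=[K]-[J]=0$ and $[K\cup(-J)]=0$ in $H_1(M;\Z)$. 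A linking number of null-homologous cycles in a closed oriented $3$--manifold is independent of the bounding $2$--chain used to compute it, because two choices differ by a \emph{closed} surface and the intersection of a closed surface with a null-homologous cycle is zero; spelled out for a second surface $\Sigma'$, the change in the invariant is $\bigl([\Sigma]-[\Sigma']\bigr)\cdot\bigl([K]-[J]\bigr)=0$ under the intersection pairing $H_2(M)\times H_1(M)\to\Z$. Since no trivialization of $\xi$ enters, no $c_1$--term appears, exactly as the statement says.

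\emph{Parts (b) and (c).} For (b), set $\mathrm{rot}(K,J;\Sigma)=\langle e_{\mathrm{rel}}(\xi),[\Sigma]\rangle$, where $e_{\mathrm{rel}}(\xi)\in H^2(M,K\cup J)$ is the relative Euler class of $\xi$ relative to the nowhere-zero section of $\xi|_{K\cup J}$ that restricts to $TK$ on $K$ and to $TJ$ on $J$ (here we use that $K$ and $J$ are Legendrian), and $[\Sigma]\in H_2(M,K\cup J)$. One checks this coincides, up to the usual sign, with the elementary recipe: trivialize $\xi|_\Sigma$ and subtract the winding number of $TJ$ from that of $TK$ along $\partial\Sigma$. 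As a pairing of a fixed class with $[\Sigma]$ it is an integer depending only on the class $[\Sigma]\in H_2(M,K\cup J)$, and in particular not on the trivialization. For a second surface $\Sigma'$ with the same oriented boundary, $[\Sigma]-[\Sigma']$ has zero image in $H_1(K\cup J)$, hence lies in the image of $H_2(M)\to H_2(M,K\cup J)$; since $e_{\mathrm{rel}}(\xi)$ maps to $c_1(\xi)$ under $H^2(M,K\cup J)\to H^2(M)$, the difference of the two values is the corresponding value of $c_1(\xi)$, i.e.\ the stated ambiguity $d=c_1(\xi)\bigl([\Sigma]-[\Sigma']\bigr)$, which vanishes for tight $(M,\xi)$ (see the final paragraph). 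Part (c) is the same, with the feature that for transverse $K,J$ one has $\xi|_{K\cup J}=\nu_{K\cup J}$, so the $\Sigma$--surface framing is itself a nowhere-zero section of $\xi|_{K\cup J}$ and may be used as the boundary section; independence of the auxiliary trivialization used to compare it against a reference framing is again the cancellation forced by $[K]=[J]$ in $H_1(\Sigma;\Z)$ (two trivializations of $\xi|_\Sigma$ differ by a map $\Sigma\to S^1$ whose restrictions to $K$ and to $J$ have equal degree), and the $\Sigma$--dependence is once more $c_1(\xi)\bigl([\Sigma]-[\Sigma']\bigr)$.

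\emph{Main obstacle.} I expect the real work to be a careful sign audit rather than a conceptual point: fixing once and for all the boundary orientation of $K\cup J$, the sign convention for the contact framing, the signs relating $\mathrm{rot}$ and $\mathrm{sl}$ to the relative Euler class, and the orientation of the closed surface $\Sigma-\Sigma'$, so that all of the cancellations above hold on the nose. Alongside this is the one genuinely geometric lemma behind (a) and (c): that the difference of the two surface framings along $K$ equals the intersection number of the closed surface $\Sigma-\Sigma'$ with $K$ — immediate from the tubular-neighborhood picture, but the single place where the argument touches geometry rather than pure homological algebra. One should also state explicitly which property of tight contact structures is being invoked to annihilate the $c_1$--term.
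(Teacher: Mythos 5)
Your treatment of the dependence on auxiliary data is correct and, for part (a), arguably cleaner than the paper's: you compute the change of $\widetilde{tb}$ under a change of Seifert surface as $([\Sigma]-[\Sigma'])\cdot([K]-[J])=0$ by pure homological algebra, whereas the paper proves the corresponding statement (Lemma \ref{independent}, that $tw_K(Fr_{\Sigma_1},Fr_{\Sigma_2})=tw_J(Fr_{\Sigma_1},Fr_{\Sigma_2})$) by a geometric cut-and-paste resolution of the circle and ribbon intersections between $\Sigma_1$ and the closed difference surface. Your trivialization-independence argument (the comparison map $\Sigma\to S^1$ has equal degree on the two boundary components) is exactly Lemma \ref{trivializationdependence}, and your relative-Euler-class computation of the $\Sigma$-dependence in parts (b) and (c) matches Lemma \ref{independent2} and its self-linking analogue.

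There is, however, a genuine gap: in this paper ``well-defined'' includes invariance under Legendrian (resp.\ transverse) isotopies of $K$, in particular isotopies whose trace crosses the reference knot $J$, and this --- which is where nearly all of the paper's work lies --- is absent from your proposal. When $\varphi_t(K)$ crosses $J$, the Seifert surface cannot be carried along by isotopy extension; a new surface $\Sigma_+$ must be built by resolving the clasp intersections of $\Sigma_-$ with the trace annulus $A$ (Lemma \ref{construction}), and that resolution shifts the surface framing along $K_+$ and along $J$ by the same amount $J\cdot A$ up to sign (Lemmas \ref{twistingnearbeta} and \ref{twistingnearalpha}); Theorem \ref{legendriantransverseinvariance} then checks that these shifts cancel in $\widetilde{tb}$, $\widetilde{r}$, and $\widetilde{sl}$. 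Your homological framework cannot see this step, because the pair $(M,K\cup J)$ itself changes under the isotopy and $[\Sigma_+]$ is not obtained from $[\Sigma_-]$ by adding a closed surface, so without the cancellation argument the relative invariants could a priori jump by $\pm1$ each time $K$ crosses $J$. (Your closing remark is also well taken: neither you nor the paper actually supplies the argument that tightness kills the $c_1$-ambiguity --- the Euler class of a tight contact structure need not vanish --- so that clause requires an independent justification.)
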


In order to prove Theorem \ref{main}, we first discuss the dependence of each relative invariant on the Seifert surface for $K$ and $J$. Then we show that although Legendrian (resp. transverse) isotopies do not preserve the homology type of $K$ in $H_1(M\setminus J)$, they do not change the values of the respective relative invariants. 

\begin{remark} 
Since the contact structure $\xi$ is trivial over the compact Seifert surfaces with boundary, we do not require that $\xi$ be globally trivializable over $M$.
\end{remark}

\section{Acknowledgements} 

I would like to express deep gratitude to John Etnyre for his guidance and help with many fundamental and technical aspects of this work. I am also grateful to my advisor Danny Ruberman for his patience and support throughout my graduate years when the ideas of this paper were developed. I would like to also thank Vladimir Chernov and Jeremey Van Horn-Morris for helpful discussions and correspondence. 

\section{Background}

If $K\subset (M,\xi)$ is a null-homologous Legendrian knot in a contact 3-manifold, then from the exact sequence for the relative homology of the pair $(M,K)$, we see that there exists a relative class $\alpha\in H_2(M,K;\Z)$ such that $\alpha$ maps to $[K]\in H_1(K;\Z)$ under the boundary map $\partial:H_2(M,K;\Z)\rightarrow H_1(K;\Z)$, moreover, there exists a smooth embedded oriented Seifert surface $\Sigma\subset M$ for $K$ such that $[\Sigma]=\alpha$. 

If $K\subset (M,\xi)$ is a null-homologous Legendrian knot in a closed oriented contact 3-manifold, then the {\em Thurston-Bennequin invariant} of $K$ is defined as $tb_\Sigma(K)=tw_K(\xi,Fr_\Sigma)$, which measures the number of $2\pi$-twists (with sign) of the contact framing with respect to the Seifert framing along $K$. 

Equivalently, one can define $tb_\Sigma(K)$ as $lk(K,K')=K'\cdot\Sigma$, where $K'$ is a push-off of $K$ in the direction normal to the contact planes along $K$ (since $\xi$ is trivial over $\Sigma$ and, in particular, along $K=\partial\Sigma$, see \cite{etnyre:intro, etnyre:knots}, it is coorientable along $K$). The linking number $lk(K',K)$ is well-defined and equals to an integer $n$, where $\ker\big{(}H_1(M\setminus K)\rightarrow H_1(M)\big{)}\cong\Z$ is generated by a meridional disc with homology class $[\mu]$ and $[K']\in H_1(M\setminus K)$ is in this kernel with $[K']=n[\mu]$. The value of $tb_\Sigma(K)$ depends only on the relative homology class $[\Sigma]\in H_2(M,K;\Z)$ since the Seifert framing for null-homologous $K$ is well-defined, so $tb_{\Sigma'}(K)=tb_\Sigma(K)$ for any $\Sigma'$ with $[\Sigma']=[\Sigma]$, and for a choice of another $[\Sigma'']\in H_2(M,K;\Z)$, $tb_{\Sigma''}(K)-tb_\Sigma(K)=tw_K(\xi,Fr_{\Sigma''})-tw_K(\xi,Fr_\Sigma)=tw_K(Fr_{\Sigma''},Fr_\Sigma)$, which could be nontrivial if $H_2(M;\Z)\neq 0$.

Additionally, for a null-homologous Legendrian knot $K$ in $(M,\xi)$, the {\em rotation number} $r_\Sigma(K,\sigma)$ is defined for a trivialization of $\xi\rvert_\Sigma$, given by a nonzero section $\sigma:\xi\rvert_\Sigma\rightarrow \Sigma\times\R^2$, whose restriction to $K$ yields a non-zero vector field $v_K$ tangent to $K$. The rotation number $r_\Sigma(K,\sigma)$ of $K$ with respect to this trivialization is given by $e(\xi,v_K)([\Sigma])$, where $e(\xi)\in H^2(M;\Z)$ is the Euler class of $\xi$ (for a choice of coorientation of $\xi$) and $e(\xi,v_K)$ is the relative Euler class of $\xi\rvert_K$.  

Equivalently, for a non-zero vector field tangent to $K$, the rotation number is the obstruction of extending $v_K$ to a nonzero vector field in $\xi\rvert_\Sigma$. More directly, one can compute the rotation number as the winding number $w_\sigma(v_K)$ of the image of $v_K$ in $\R^2$ under the map $\xi\rvert_K\rightarrow K\times\R^2$ which is the restriction of the trivialization $\xi\rvert_\Sigma$ to $K$. The rotation number does not depend on the trivialization $\sigma$ (so we will omit it from the notation), but it does depend on the relative homology class $[\Sigma]\in H_2(M,K;\Z)$ and on the orientation of $\Sigma$. For a choice of another relative homology class $[\Sigma']\in H_2(M,K;\Z)$, we have $r_{\Sigma'}(K)-r_\Sigma(K)=\pm e(\xi)([\Sigma']-[\Sigma])$, where $[\Sigma']-[\Sigma]\in H_2(M;\Z)$. 

For a null-homologous transverse knot $K\subset (M,\xi)$ in an oriented contact 3-manifold $M$, let $\Sigma\subset M$ be a Seifert surface for $K$, then we can define the {\em self-linking number} of $K$ as $sl_\Sigma (K)=K'\cdot \Sigma=lk_\Sigma(K',K)$, where $K'$ is a push-off of $K$ in the direction of (the restriction along $K$ of) a nonzero vector field $v$ on $\Sigma$ (such a vector field exists since $\xi\rvert_\Sigma$ is trivial). 

The self-linking number can be interpreted as the obstruction of extending a non-zero vector field $v_K$ (which points out of $\Sigma$) along $K$ in $\xi\cap T\Sigma$ to a non-zero vector field in $\xi\rvert_\Sigma$, in which case a push-off $K'$ of $K$ along $v_K$ would have linking $0$ with $K$ relative to $\Sigma$. Similarly to the rotation number, upon fixing a coorientation of the contact plane field, a choice of a different relative homology class $[\Sigma']\in H_2(M,K;\Z)$ gives us $sl_{\Sigma'}(K)-sl_\Sigma(K)=\pm e(\xi)([\Sigma']-[\Sigma])$, where the sign depends on whether the orientation of $K$ coincides with the transverse orientation (or coorientation) of $\xi$. For more details, read \cite{aebisher, etnyre:intro, etnyre:knots, geiges}. 

\section{Basic Definitions and Properties}

We are considering homologous oriented knots $K$ and $J$ in a contact 3-manifold $M$. Equivalently, the oriented link $K\cup J$ is null-homologous in $M$, so the usual argument generalizes directly to show there exists a smooth embedded surface $\Sigma\subset M$, where $[\Sigma]\in H_2(M,K\cup J;\Z)$ is mapped to $[K\cup J]\in H_1(K\cup J;\Z)$ by the boundary homomorphism, that is, $\Sigma$ is a Seifert surface for $K\cup J$.

\begin{remark}
Always orient $K$ and $J$ as boundary components of the oriented $\Sigma$ so that $[\partial\Sigma]=[K]-[J]$.
\end{remark}

We generalize the classical invariants to the case when $K$ is homologous to another knot $J$ in $(M,\xi)$ as follows.

\begin{definition}
Let $K$ and $J$ be homologous Legendrian knots in a contact 3-manifold $(M,\xi)$ oriented accordingly with $K\cup J=\partial\Sigma$ for an oriented embedded Seifert surface $\Sigma$ so that $[\partial\Sigma]=[K]-[J]$. Define the {\em Thurston-Bennequin invariant of $K$ relative to $J$} by $$\widetilde{tb}_\Sigma(K,J):=tw_K(\xi,Fr_{\Sigma})-tw_{J}(\xi,Fr_{\Sigma}),$$ where $Fr_\Sigma$ denotes the Seifert framing that $K$ (resp. $J$) inherits from $\Sigma$, and $tw(\xi,Fr_\Sigma)$ denotes the number of $2\pi$-twists (with sign) of the contact framing relative to $Fr_\Sigma$ along $K$ or $J$. For push-offs $K'$ and $J'$ of $K$ and $J$ in the direction normal to the contact planes, $\widetilde{tb}_\Sigma(K,J)=K'\cdot\Sigma-J'\cdot\Sigma=lk_\Sigma(K',K)-lk_\Sigma(J',J)$.
\end{definition}

\begin{definition} 
Let $K$ and $J$ be homologous Legendrian knots in a contact 3-manifold $(M,\xi)$ oriented accordingly with $K\cup J=\partial\Sigma$ for an oriented embedded Seifert surface $\Sigma$ so that $[\partial\Sigma]=[K]-[J]$. The restriction to $K$ of the trivialized contact 2-plane field $\xi\rvert_\Sigma$ gives a map $\sigma:\xi\rvert_K\rightarrow K\times\R^2$, under which a non-zero tangent vector field $v_K$ to $K$ traces out a path of vectors in $\R^2$. We can then compute the winding number $w_\sigma(v_K)$ and similarly for $J$. Then define the \textit{relative rotation number of $K$} by $$\widetilde{r}_\Sigma(K,J):=w_\sigma(v_K)-w_\sigma(v_J).$$ Equivalently, $\widetilde{r}_\Sigma(K,J)=e(\xi,v_K\cup v_J)([\Sigma])$.
\end{definition}

\begin{definition} 
Let $K$ and $J$ be homologous transverse knots in a contact 3-manifold $(M,\xi)$ oriented accordingly with $K\cup J=\partial\Sigma$ for an oriented embedded Seifert surface $\Sigma$ so that $[\partial\Sigma]=[K]-[J]$. The contact 2-plane field $\xi$ is trivial over $\Sigma$, so there exists a nonzero vector field $v$ in $\xi\rvert_\Sigma$. Take $K'$ and $J'$ to be the push offs of $K$ and $J$ in the direction of $v$. Then define the {\em relative self-linking number} of $K$ with respect to $J$ by 
$$\widetilde{sl}_\Sigma (K,J):=K'\cdot\Sigma-J'\cdot\Sigma.$$
\end{definition}

Next we give some basic properties of the relative invariants and discuss their dependence on the Seifert surface.

Let $K$, $K'$, and $J$ be oriented Legendrian knots in a contact 3-manifold accordingly with $K\cup J=\partial\Sigma$ so that $[\partial\Sigma]=[K]-[J]$ for an oriented embedded surface $\Sigma$, and let $K'$ be homologous to $K$ and $J$ via a surface $\Sigma'$ oriented accordingly with $-K\cup K'=\partial\Sigma'$ so that $[\partial\Sigma']=[K]-[K']$ and assume $\Sigma''$ is and embedded surface oriented with $-K'\cup -J=\partial\Sigma''$ so that $[\partial\Sigma'']=[-K']-[-J]$. Assume that $\Sigma$, $\Sigma'$ and $\Sigma''$ are disjoint away from their common boundaries. Then 
$$\widetilde{tb}_\Sigma(K,J)=\widetilde{tb}_{\Sigma'}(-K,K')+\widetilde{tb}_{\Sigma''}(-K',-J).$$ 
Note also that $\widetilde{tb}_\Sigma(K,J)=-\widetilde{tb}_\Sigma(J,K)$, and $\widetilde{tb}_\Sigma(K,J)=\widetilde{tb}_{-\Sigma}(-K,-J)$. To see this, check that $tw_K(\xi,Fr_\Sigma)$ and $tw_J(\xi,Fr_\Sigma)$ are independent of the orientations of $K$ and $J$. Under (de)stabilization (see \cite{etnyre:knots, geiges}), we have $tb_{\Sigma'}(S_\pm(K))=tb_\Sigma(K)-1$, therefore $\widetilde{tb}_{\Sigma'}(S_\pm(K))=\widetilde{tb}_\Sigma(K)-1$ and $\widetilde{tb}_{\Sigma'}(S_\pm(K), S_\pm(J))=\widetilde{tb}_\Sigma(K,J)$, where $\Sigma'$ denotes the surface after (de)stabilization.

\begin{lemma}\label{framing} 
Consider a contact 3-manifold $(M,\xi)$ and homologous Legendrian knots $K$ and $J$ in $M$ oriented accordingly with $K\cup J=\partial\Sigma$ so that $[\partial\Sigma]=[K]-[J]$, where $\Sigma\subset M$ is an oriented embedded surface. Then the Seifert framing $Fr_\Sigma$ on $K$ (resp., on $J$) is uniquely defined by the relative homology class $[\Sigma]\in H_2(M,K\cup J;\Z)$.
\end{lemma}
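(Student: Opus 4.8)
The plan is to show that the Seifert framing $Fr_\Sigma$ on $K$ depends only on the relative homology class $[\Sigma] \in H_2(M, K\cup J;\Z)$, and symmetrically for $J$. A framing of $K$ is a choice of nonzero normal vector field up to homotopy, equivalently a choice of parallel pushoff $K^+$ up to isotopy in the complement of $K$, equivalently (once $K$ is oriented) an element of $H_1(M\setminus K)$ lying in the coset determined by $[K]$ under $H_1(M\setminus K)\to H_1(M)$; the ambiguity in this coset is exactly the image of the meridian subgroup. So to prove $Fr_\Sigma$ is well-defined by $[\Sigma]$, I must show: if $\Sigma_1$ and $\Sigma_2$ are two embedded oriented surfaces with $\partial \Sigma_i = K\cup J$ (oriented so $[\partial\Sigma_i]=[K]-[J]$) and $[\Sigma_1]=[\Sigma_2]$ in $H_2(M,K\cup J;\Z)$, then the framing that $K$ inherits from $\Sigma_1$ equals the framing it inherits from $\Sigma_2$.

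First I would recall the definition of the inherited framing: a collar neighborhood of $K$ in $\Sigma_i$ is an annulus, and the pushoff $K_i^+ \subset \Sigma_i$ obtained by sliding $K$ slightly into $\Sigma_i$ along this collar gives the framing. Equivalently, $Fr_{\Sigma_i}$ is characterized by the property that $K_i^+$ bounds $\Sigma_i$ minus a collar together with a copy of $J$ with reversed orientation; in homological terms, $[K_i^+] - [J]$ is nullhomologous in $M\setminus K$ when capped appropriately — more precisely, the class of the longitude is pinned down by the surface. Then I would argue: since $[\Sigma_1]=[\Sigma_2]$ in $H_2(M,K\cup J;\Z)$, their difference $\Sigma_1 \cup_{K\cup J} (-\Sigma_2)$ is an absolute cycle representing $0$ in the image of $H_2(M;\Z) \to H_2(M,K\cup J;\Z)$; chasing the long exact sequence of the pair, the difference of the two longitudes $[K_1^+]-[K_2^+]$ in $H_1(M\setminus(K\cup J);\Z)$ maps to something controlled by this vanishing class. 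The key computation is to localize near $K$: restrict attention to a tubular neighborhood $\nu(K)\cong K\times D^2$, where both surfaces meet $\partial\nu(K)$ in parallel longitudes, and show these longitudes are isotopic on the torus $\partial\nu(K)$ precisely because the surfaces agree in $H_2(M,K\cup J)$ — the difference class, being in the image from $H_2(M)$, contributes a multiple of $[\partial\nu(K)]$ which restricts trivially (or more carefully, a class whose intersection with the meridian of $K$ is zero).

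The main obstacle I expect is bookkeeping the role of the second boundary component $J$: unlike the classical nullhomologous case where the longitude is determined by $K$ alone, here the framing on $K$ is entangled with the choice of surface near $J$ as well, and two surfaces in the same relative class could in principle differ by an absolute 2-cycle that "wraps" near $J$ without affecting the class. I would handle this by noting that the longitude of $K$ is read off purely from $\partial\nu(K)$, so any modification of the surface supported away from $\nu(K)$ (in particular near $J$) cannot change $Fr_\Sigma|_K$; thus it suffices to compare the two surfaces inside $\nu(K)$, where $H_2(\nu(K), K; \Z) \cong \Z$ is detected by intersection with the meridian disc, and equality of relative classes globally forces equality of these local intersection numbers. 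The symmetric argument for $J$ is identical with the roles of $K$ and $J$ interchanged (and orientations adjusted), completing the proof.
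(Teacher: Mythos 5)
Your overall strategy is the right one and matches the paper's: a framing of $K$ is the isotopy class of a longitude on the boundary torus $\partial\nu(K)$, hence is determined by its class in $H_1(\partial\nu(K))\cong\Z^2$, and the task is to show that this class is a function of $[\Sigma]\in H_2(M,K\cup J;\Z)$. You also correctly identify the genuine subtlety, namely that an absolute $2$-cycle could in principle wrap around $K$ and shift the longitude by a multiple of the meridian. But the step where you resolve this is where the argument breaks. The framing coefficient is the \emph{meridional} component $n$ in $[K_1^+]-[K_2^+]=n\mu\in H_1(\partial\nu(K))$, and none of the detectors you invoke see it: the intersection of $n\mu$ with the meridian is zero for every $n$ (so ``a class whose intersection with the meridian of $K$ is zero'' is vacuous); the intersection with the meridian \emph{disc} detects the longitudinal coefficient, which is $1$ for both pushoffs regardless of framing; and the group you cite, $H_2(\nu(K),K;\Z)$, is actually $0$ (the pair is a homotopy equivalence), not $\Z$. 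Note also that one cannot argue via bounding in the complement: if $[K]$ has infinite order in $H_1(M)$, a nonzero multiple of $\mu$ can already be null-homologous in $M\setminus K$, so ``the difference bounds'' does not force $n=0$. The framing must be read off on the torus itself, not in $H_1(M\setminus K)$.

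The clean repair --- and the paper's actual proof --- is to avoid comparing two surfaces and instead exhibit the longitude class directly as the image of $[\Sigma]$ under a composite of homomorphisms: $H_2(M,K\cup J)\xrightarrow{\ \cong\ }H_2(M,N_K\cup N_J)\xrightarrow{\ \cong\ }H_2\bigl(M\setminus(N_K\cup N_J),\partial N_K\cup\partial N_J\bigr)\xrightarrow{\ \partial\ }H_1(\partial N_K)\oplus H_1(\partial N_J)$, where the first map is induced by inclusion (a deformation retraction argument), the second is excision, and $\partial$ is the connecting map. For a representative surface $\Sigma$ transverse to the tori, $\partial$ of its excised class is exactly $[K']\oplus[J']$, the classes of the pushoffs into $\Sigma$. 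Since the composite is a homomorphism of homology groups, $[\Sigma_1]=[\Sigma_2]$ immediately gives $[K_1^+]=[K_2^+]$ in $H_1(\partial N_K)$, hence equal framings; no separate analysis of the difference cycle near $K$ or near $J$ is needed. If you want to keep your ``difference surface'' formulation, you would still have to route it through this excision-plus-boundary map, at which point you have reproduced the paper's argument.
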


\begin{proof} 
Take neighborhoods $N_K$ and $N_J$, we have the maps (with $\Z$-coefficients) $$H_2(M, K\cup J)\xrightarrow{i} H_2(M,N_K\cup N_J)$$ $$H_2(M,N_K\cup N_J)\xrightarrow{f}H_2(M\setminus (N_K\cup N_J), \partial N_K\cup \partial N_J)\xrightarrow{\partial}H_1(\partial N_K\cup \partial N_J).$$ The map $i$ is an isomorphism because $M\setminus (K\cup J)$ deformation retracts onto $M\setminus (N_K\cup N_J)$. The map $f$ is an isomorphism from excising $N_K\cup N_J$, and $\partial$ is the usual boundary map on relative homology. Let $\Psi=i\circ f\circ \partial$, then $\Psi([\Sigma])$ gives the homology classes $[K']\in H_1(\partial N_K)$ and $[J']\in H_1(\partial N_J)$ of corresponding push-offs of $K$ and $J$ into $\Sigma$. So $[K']$ and $[J']$ are uniquely defined by $[\Sigma]\in H_2(M, K\cup J)$. 
\end{proof}

The following states that framings on $K$ and $J$ induced by different relative homology classes in $H_2(M,K\cup J)$ differ by the same number of twists along each boundary component.

\begin{lemma} \label{independent}
Consider knots $K$ and $J$ oriented accordingly with $\partial\Sigma_1=K\cup J$ for an oriented Seifert surface $\Sigma_1$ in a 3-manifold $M$ so that $[\partial\Sigma_1]=[K]-[J]$. Then for any other oriented Seifert surface $\Sigma_2$ with $K\cup J=\partial\Sigma_2$ so that $[\partial\Sigma_2]=[K]-[J]$,
$$tw_K(Fr_{\Sigma_1},Fr_{\Sigma_2})=tw_J(Fr_{\Sigma_1},Fr_{\Sigma_2}).$$
\end{lemma}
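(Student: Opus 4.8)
The plan is to reduce the statement to Lemma~\ref{framing} together with the long exact sequence of the pair $(M,K\cup J)$. The underlying idea is that $\Sigma_1$ and $\Sigma_2$ determine the same class $[K]-[J]\in H_1(K\cup J;\Z)$ under the connecting map, so $[\Sigma_1]-[\Sigma_2]$ comes from a class in $H_2(M;\Z)$; and a class coming from $H_2(M;\Z)$, run through the map $\Psi$ of Lemma~\ref{framing}, changes the $K$-longitude and the $J$-longitude by the same number of meridians, because $[K]=[J]$ in $H_1(M;\Z)$.

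First I would recall the homomorphism $\Psi\colon H_2(M,K\cup J;\Z)\to H_1(\partial N_K;\Z)\oplus H_1(\partial N_J;\Z)$ from the proof of Lemma~\ref{framing}, for which $\Psi([\Sigma])$ records the two longitudes that $\Sigma$ cuts on $\partial N_K$ and $\partial N_J$, i.e.\ the framings $Fr_\Sigma$ on $K$ and $J$. Hence $tw_K(Fr_{\Sigma_1},Fr_{\Sigma_2})$ and $tw_J(Fr_{\Sigma_1},Fr_{\Sigma_2})$ are, up to the fixed orientation conventions for the meridians $\mu_K,\mu_J$ (the ones compatible with the boundary orientations $K$ and $J$ carry on $\Sigma$), the meridian coefficients of the two components of $\Psi([\Sigma_1])-\Psi([\Sigma_2])$, and it suffices to show these two coefficients coincide.

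Since $\partial_*[\Sigma_1]=\partial_*[\Sigma_2]=[K]-[J]$, exactness of $H_2(M;\Z)\to H_2(M,K\cup J;\Z)\xrightarrow{\partial_*}H_1(K\cup J;\Z)$ produces a class $[S]\in H_2(M;\Z)$, which I represent by a closed oriented surface (or $2$-cycle) $S$ in general position, with $[\Sigma_1]-[\Sigma_2]$ equal to the image of $[S]$. Tracing $\Psi$ through its definition (inclusion, excision, boundary), $\Psi$ applied to that image is $\big([S\cap\partial N_K],\,[S\cap\partial N_J]\big)$. For a closed $S$ the longitude coefficient of $[S\cap\partial N_K]$ vanishes, since it equals, up to sign, the intersection number $[S]\cdot[\mu_K]$ and $[\mu_K]=0$ in $H_1(M;\Z)$; so $[S\cap\partial N_K]$ is a multiple of $[\mu_K]$, and computing that multiple by intersecting $S$ near a point of $S\cap K$ identifies it with $\pm\,[S]\cdot[K]$ (intersection in $M$). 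Likewise $[S\cap\partial N_J]=\big(\pm\,[S]\cdot[J]\big)[\mu_J]$ with the \emph{same} sign. Since $[K]=[J]$ in $H_1(M;\Z)$ we get $[S]\cdot[K]=[S]\cdot[J]$, so the two meridian coefficients of $\Psi([\Sigma_1])-\Psi([\Sigma_2])$ agree, which is the assertion.

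The step I expect to be the real work is the orientation bookkeeping running through the last two paragraphs: one must be sure that the sign convention implicit in $tw_J(Fr_{\Sigma_1},Fr_{\Sigma_2})$ is the one in which $J$ enters with the orientation it carries on $\partial\Sigma$, so that the minus sign coming from $J$ appearing as $-J$ in $[\partial\Sigma]=[K]-[J]$ is absorbed on both sides simultaneously; with a careless choice the computation returns $tw_K=-tw_J$ instead. Once that convention is pinned down (equivalently, once the coorientations used to define the Seifert framings $Fr_\Sigma$ are fixed), the conclusion is forced. A more hands-on variant that makes the same point avoids $\Psi$ entirely: push $\Sigma_2$ off itself to a disjoint parallel copy $\Sigma_2^{+}$, whose boundary $K^{+}\cup(-J^{+})$ consists of the $Fr_{\Sigma_2}$-framed push-offs of $K$ and $J$; then $tw_K$ and $tw_J$ are read off (up to sign) as the intersection numbers $K^{+}\cdot\Sigma_1$ and $J^{+}\cdot\Sigma_1$, and since $K^{+}-J^{+}=\partial\Sigma_2^{+}$ is null-homologous in $M\setminus(K\cup J)$ these two intersection numbers are equal; the same sign subtlety reappears there.
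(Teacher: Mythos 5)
Your argument is correct, and it takes a genuinely different route from the paper's. The paper also begins by writing $[\Sigma_1]-[\Sigma_2]$ as the image of a class $[A]\in H_2(M;\Z)$, and it also rests on the key identity $K\cdot A'=J\cdot A'$ (arranged geometrically by tubing off cancelling intersection points along $J$); but from there it proceeds by explicit surgery, smoothing the circle components of $\Sigma_1\cap A'$ and resolving the $\lvert K\cdot A'\rvert$ ribbon arcs running from $K$ to $J$ one at a time, reading off from the local picture (Figure \ref{f11}) that each resolution contributes one twist at each endpoint of the arc. You replace that cut-and-paste bookkeeping with a computation in homology: you push $[\Sigma_1]-[\Sigma_2]$ through the map $\Psi$ of Lemma \ref{framing} (which the paper constructs but never actually applies to prove Lemma \ref{independent}) and identify the meridian coefficients of the two components of $\Psi$ of the image of $[S]$ with $\pm\,S\cdot K$ and $\pm\,S\cdot J$. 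Your approach buys rigor and brevity --- it does not lean on figures or an innermost-first procedure, and it isolates exactly where the hypothesis $[K]=[J]$ enters --- while the paper's approach produces an explicit surface in the class $[\Sigma_1]+[A]$ and rehearses resolution techniques reused in Lemma \ref{construction}. The sign subtlety you flag is real and you resolve it correctly: with both meridians oriented by $lk(\mu,K)=lk(\mu,J)=+1$, a direct check gives $tw_K=-S\cdot K$ but $tw_J=+S\cdot J$, so the asserted equality holds only under the convention, implicit in the paper (compare $\widetilde{tb}_\Sigma(K,J)=K'\cdot\Sigma-J'\cdot\Sigma$), that twisting along $J$ is measured using the boundary orientation $-J$, i.e.\ the meridian cooriented by $\Sigma$; this is also why the paper's Figure \ref{f11} reports the two endpoint twists as having ``opposite signs'' while the lemma asserts equality.
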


\begin{proof} Consider a homology class $[\Sigma_1]\in H_2(M,J\cup K)$ and a surface $\Sigma_2$ with $[\Sigma_1]=[\Sigma_2]\in H_2(M, J\cup K)$, then $[\Sigma_1]-[\Sigma_2]$ is a class in $H_2(M)$. So any other surface for $K\cup J$ is obtained from $\Sigma_1$ by adding on a closed surface $A$ of some homology class $[A]\in H_2(M)$. We construct a surface $\Sigma_1'$ with boundary $J\cup K$ representing the class $[\Sigma_1]+[A]$ and show that the above equality is true for $\Sigma_1'$. 

First, observe that $J\cdot A'=K\cdot A'$ for any $A'\in[A]$. We can ensure this is true by modifying the interior of $A'$ as follows. Replace small closed 2-discs in $A'$ around consecutive intersection points (with opposite signs) of $A'$ along $J$ with a cylinder whose boundary components are the boundaries of the two 2-discs and which runs in a neighborhood of $J$. This eliminates such intersection points in pairs. 

The intersections between $\Sigma$ and $A'$ consist of $\rvert J\cdot A'\rvert=\rvert J\cdot A'\rvert$ ribbon arcs and possibly some circles on the interior of $\Sigma$. We first eliminate the circle intersections by cutting-open the oriented surfaces along each circle and gluing up the pieces to obtain a new oriented surface (there is an unique obvious way to do this, which depends on the orientations of the surfaces, see Figure \ref{f10} below, in particular, reversing the orientation of one of the surfaces results in the alternate gluing up of the cut open surfaces). 

\begin{figure}[!ht]
\includegraphics[scale=.45]{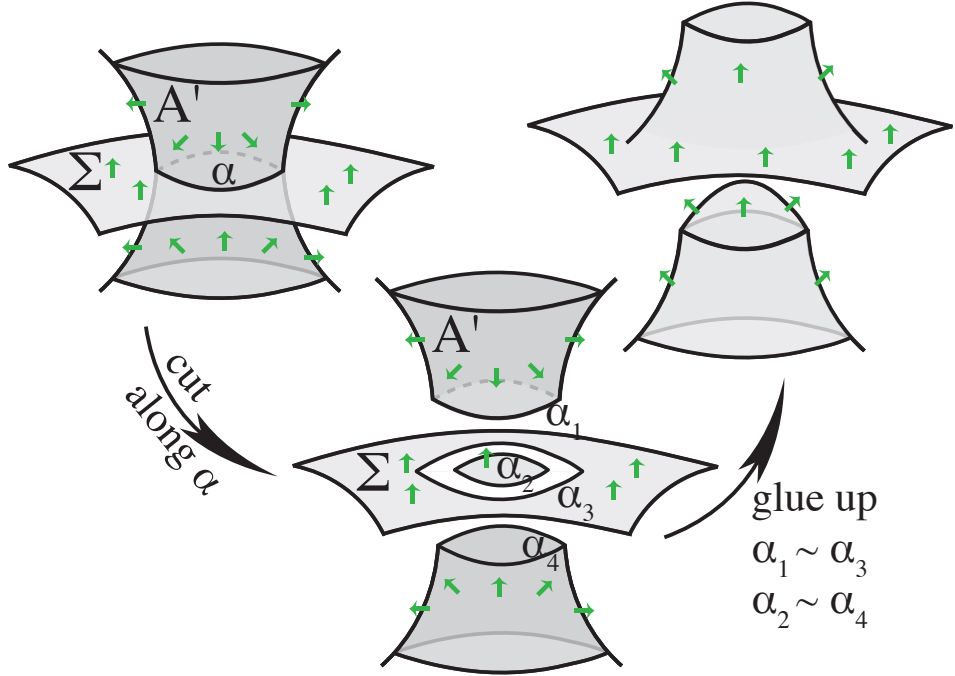}
\caption{Smoothing circle intersections between $\Sigma$ and $A'$.}\label{f10}
\end{figure}

Next, consider the ribbon intersection arcs and label them $\delta_i,i=1,\dots,\rvert J\cdot A'\rvert$. Note that each arc $\delta_i$ runs on $\Sigma$ from a point $a_i\in K$ to a point $b_i\in J$ with $\partial\delta_i=\{a_i,b_i\}$, so that on $A'$, $a_i$ and $b_i$ are interior points and $\delta_i$ is an interior arc (see Figure \ref{f1}). 

To resolve the $\delta_i$, cut along the interior of each and glue the surfaces (only one way to do this so that we get an oriented surface, see Figure \ref{f11} below) and note that the twists at each endpoint of the arc are opposite.

\end{proof}

Lemma \ref{independent} implies that the relative Thurston-Bennequin invariant is independent not only of the Seifert surface but also of the relative homology class. This will be useful when proving invariance under Legendrian isotopies.

For the relative rotation number, consider Legendrian knots $K$ and $J$ oriented as boundary components of an oriented embedded surface $\Sigma$, we have $\widetilde{r}_\Sigma(K,J)$, $\widetilde{r}_{-\Sigma}(-K,-J)=-\widetilde{r}_\Sigma(K,J)$. Under (de)stabilization of $K$, only the winding number along $K$ would change by $\pm 1$, therefore, $\widetilde{r}_{\Sigma'}(S_\pm(K),J)=\widetilde{r}_\Sigma(K,J)\pm1$, where $\Sigma'$ denotes the new surface after (de)stabilization. The relative rotation number does not depend on the trivialization of $\xi\rvert_\Sigma$ but it does depend on the Seifert surface through its relative homology class $[\Sigma]\in H_2(M,K\cup J;\Z)$ and its orientation.

\begin{figure}[!ht]
\includegraphics[scale=.52]{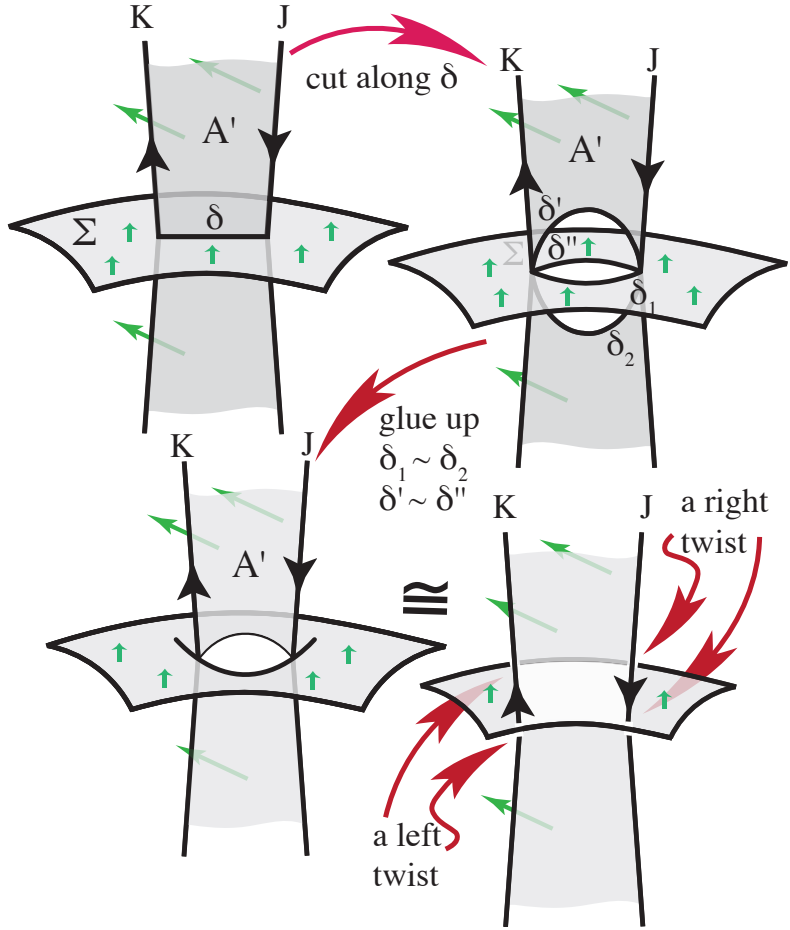}
\caption{Smoothing ribbon intersections between $\Sigma$ and $A'$ adds ONE twist along each $J$ and $K$, with opposite signs.}\label{f11}
\end{figure}

\begin{lemma}\label{trivializationdependence} 
For homologous Legendrian knots $K$ and $J$ in a contact 3-manifold $(M,\xi)$ oriented accordingly with $K\cup J=\partial\Sigma$ for an embedded oriented surface $\Sigma\subset M$ so that $[\partial\Sigma]=[K]-[J]$, the relative rotation number $\widetilde{r}_\Sigma(K,J)$ does not depend on the choice of trivialization of $\xi\rvert_\Sigma$. 
\end{lemma}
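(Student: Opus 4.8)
The plan is to reduce everything to the fact that the oriented boundary of $\Sigma$ is null-homologous \emph{in} $\Sigma$. Fix an orientation of the fibers of $\xi\rvert_\Sigma$ (equivalently a coorientation of $\xi$ along $\Sigma$; this exists because $\xi\rvert_\Sigma$ is trivial, as noted in the Remark) and work throughout with trivializations compatible with it. Given two such trivializations $\sigma_0,\sigma_1$ of $\xi\rvert_\Sigma$, they differ by a gauge transformation $g\colon\Sigma\to GL^+(2,\R)$, which deformation retracts to an $SO(2)\cong S^1$-valued map; write $\sigma_1 = g\cdot\sigma_0$ fiberwise. First I would record how a boundary winding number transforms: the path traced by $v_K$ under $\sigma_1\rvert_K$ is the path traced under $\sigma_0\rvert_K$ rotated pointwise by $g\rvert_K$, so $w_{\sigma_1}(v_K) - w_{\sigma_0}(v_K) = \deg(g\rvert_K)$, and similarly $w_{\sigma_1}(v_J) - w_{\sigma_0}(v_J) = \deg(g\rvert_J)$. (This uses that $w_\sigma$ depends only on the homotopy class of $\sigma$ restricted to the relevant boundary circle.)

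Subtracting, the change in $\widetilde{r}_\Sigma(K,J)$ under $\sigma_0\rightsquigarrow\sigma_1$ is $\deg(g\rvert_K)-\deg(g\rvert_J)$, and the crux is that this vanishes. I would argue homologically: $\deg(g\rvert_K) = \langle g^*u,[K]\rangle$ and $\deg(g\rvert_J) = \langle g^*u,[J]\rangle$ for $u\in H^1(S^1;\Z)$ the generator and $[K],[J]\in H_1(\Sigma;\Z)$. Since $\partial\Sigma = K\cup(-J)$ with $[\partial\Sigma] = [K]-[J] = \partial[\Sigma,\partial\Sigma]$ for the relative fundamental class $[\Sigma,\partial\Sigma]\in H_2(\Sigma,\partial\Sigma;\Z)$, exactness of the long exact sequence of the pair $(\Sigma,\partial\Sigma)$ forces $[K]-[J]$ to die in $H_1(\Sigma;\Z)$, i.e.\ $[K] = [J]$ there. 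Hence $\deg(g\rvert_K) = \deg(g\rvert_J)$, so $\widetilde{r}_\Sigma(K,J)$ is unchanged. Equivalently, with the closed $1$-form $\eta = g^*(d\theta/2\pi)$ on $\Sigma$, Stokes gives $\deg(g\rvert_K)-\deg(g\rvert_J) = \int_{\partial\Sigma}\eta = \int_\Sigma d\eta = 0$.

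A cleaner but less self-contained route I would also mention uses the relative Euler class description $\widetilde{r}_\Sigma(K,J) = e(\xi, v_K\cup v_J)([\Sigma])$: the relative Euler class is by definition the obstruction to extending the prescribed nonvanishing boundary section $v_K\cup v_J$ of $\xi\rvert_\Sigma$ over all of $\Sigma$, which involves no choice of trivialization, so trivialization-independence is automatic once the two descriptions are matched. The main obstacle here is not conceptual but bookkeeping: one must track orientations carefully (the boundary is $K$ together with $-J$, so the two winding numbers enter with the relative sign recorded in $[\partial\Sigma] = [K]-[J]$), handle the passage from $GL^+(2,\R)$-valued to $SO(2)$-valued gauge transformations by the standard deformation retraction, and confirm the claimed transformation law for $w_\sigma$ under a change of trivialization — all routine, but needed for the degree computation on $K$ and $J$ to be meaningful.
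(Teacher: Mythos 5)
Your proof is correct and follows essentially the same route as the paper: compare the two trivializations via an $S^1$-valued transition function and show the resulting degree corrections along $K$ and $J$ cancel. In fact your justification of the cancellation --- that the gauge transformation is defined on all of $\Sigma$ and $[K]-[J]=[\partial\Sigma]$ dies in $H_1(\Sigma;\Z)$ --- is the correct underlying reason, which the paper only gestures at by noting $H_1(K)\cong H_1(J)\cong\Z$.
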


\begin{proof} 
Consider two trivializations $\sigma_i:\xi\rvert_\Sigma\rightarrow\Sigma\times\R^2$ given by two nontrivial sections $Y_1$ and $Y_2$ so that there exists a function $f:K\rightarrow SO(2)\cong S^1$ such that $Y_1(p)=f(p)Y_2(p)$ for all $p\in K\cup J$. Therefore, the winding number $\omega_{\sigma_1}(Y_1\rvert_K)=\omega_{\sigma_2}(Y_2\rvert_K)-deg(f,K)$, where the degree of $f$ over $K$ can be computed by the induced homomorphism $\Z\cong H_1(K)\rightarrow H_1(K)\cong\Z$ in homology. Similarly, $\omega_{\sigma_1}(Y_1\rvert_J)=\omega_{\sigma_2}(Y_2\rvert_J)-deg(f,J)$. In particular, since $H_1(J)\cong H_1(K)\cong H_1(S^1)\cong\Z$, we have that $deg(f,K)=deg(f,J)$, so 
$$\widetilde{r}_\Sigma (K,J,\sigma_1)=\omega_{\sigma_1}(Y_1\rvert_K)-\omega_{\sigma_1}(Y_1\rvert_J)=\omega_{\sigma_2}(Y_2\rvert_K)-deg(f,K)-\omega_{\sigma_2}(Y_2\rvert_J)+deg(f,J)$$
$$=\omega_{\sigma_2}(Y_2\rvert_K)-\omega_{\sigma_2}(Y_2\rvert_J)=\widetilde{r}_\Sigma (K,J,\sigma_2).$$ \end{proof}

\begin{lemma}\label{independent2} 
Let $K$ and $J$ be homologous Legendrian knots oriented as $K\cup J=\partial\Sigma_1=\partial\Sigma_2$ for oriented Seifert surfaces  in a contact 3-manifold $M$ with relative homology classes $[\Sigma_1],[\Sigma_2]\in H_2(M,K\cup J;\Z)$ so that $[\partial\Sigma_i]=[K]-[J]$. Then
$$\widetilde{r}_{\Sigma_1}(K,J)-\widetilde{r}_{\Sigma_2}(K,J)=e(\xi)([\Sigma_1]-[\Sigma_2].$$ 
\end{lemma}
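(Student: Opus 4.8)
The plan is to reduce the claim to a statement about the relative Euler class, exactly as the background section handles the absolute rotation number. Fix once and for all nonvanishing tangent vector fields $v_K$ on $K$ and $v_J$ on $J$; together they give a nonvanishing section $s=v_K\cup v_J$ of $\xi$ over $K\cup J=\partial\Sigma_i$, hence a relative Euler class $e(\xi,s)\in H^2(M,K\cup J;\Z)$ which, crucially, does not depend on the choice of Seifert surface. By the identity $\widetilde{r}_\Sigma(K,J)=e(\xi,v_K\cup v_J)([\Sigma])$ recorded in the definition, linearity of the evaluation pairing gives
$$\widetilde{r}_{\Sigma_1}(K,J)-\widetilde{r}_{\Sigma_2}(K,J)=e(\xi,s)\big([\Sigma_1]-[\Sigma_2]\big),$$
so everything comes down to identifying the class $[\Sigma_1]-[\Sigma_2]$ and evaluating $e(\xi,s)$ on it.

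Next I would observe that since $\partial[\Sigma_1]=\partial[\Sigma_2]=[K]-[J]$, the class $[\Sigma_1]-[\Sigma_2]$ is killed by $\partial\colon H_2(M,K\cup J;\Z)\to H_1(K\cup J;\Z)$, so by exactness of the long exact sequence of the pair it lies in the image of $j_*\colon H_2(M;\Z)\to H_2(M,K\cup J;\Z)$; write $[\Sigma_1]-[\Sigma_2]=j_*(a)$. Because $K\cup J$ is a disjoint union of two circles, $H_2(K\cup J;\Z)=0$, so $j_*$ is injective and $a$ is the unique absolute class represented by $\Sigma_1-\Sigma_2$; this is the sense in which the right-hand side $e(\xi)([\Sigma_1]-[\Sigma_2])$ of the statement is read.

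Then I would invoke the standard compatibility of relative and absolute Euler classes: under the restriction map $j^*\colon H^2(M,K\cup J;\Z)\to H^2(M;\Z)$ the relative Euler class $e(\xi,s)$ maps to the ordinary Euler class $e(\xi)$, since forgetting the section over the boundary recovers the usual obstruction cocycle. Combining this with naturality of the pairing,
$$e(\xi,s)\big(j_*(a)\big)=\big(j^*e(\xi,s)\big)(a)=e(\xi)(a),$$
and identifying $a$ with $[\Sigma_1]-[\Sigma_2]$ finishes the argument. As an alternative more in the spirit of Lemmas \ref{framing}--\ref{independent}, one may take a closed oriented surface $A\subset M$ whose class maps to $[\Sigma_1]-[\Sigma_2]$, resolve the intersections of $A$ with $\Sigma_2$ as in the proof of Lemma \ref{independent} to build a Seifert surface homologous to $\Sigma_1$, and note that the winding numbers of $v_K$ and $v_J$ acquire no new contribution from the resolution while the closed piece $A$ contributes exactly $e(\xi)([A])$; this is just a geometric repackaging of the same additivity.

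The only point needing genuine care is the second step: making sure $[\Sigma_1]-[\Sigma_2]$ is unambiguously an absolute class, which rests on $H_2(K\cup J;\Z)=0$, together with stating precisely (rather than reproving) that $j^*$ carries the relative Euler class to $e(\xi)$. Everything else is formal, namely linearity of evaluation and the surface-independence of $e(\xi,v_K\cup v_J)$, both already built into the definition of $\widetilde{r}_\Sigma(K,J)$.
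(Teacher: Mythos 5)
Your proof is correct, but it takes a genuinely different route from the paper's. The paper argues geometrically: it forms the closed surface $\Sigma_1\cup(-\Sigma_2)$, arranges the two trivializations to agree along arcs of $K$ and $J$, excises discs $D_K$ and $D_J$ joined by a band $N(\alpha)$, and identifies the difference of winding numbers with the rotation of the trivialization around $\partial(D_K\cup N(\alpha)\cup D_J)$, which computes $e(\xi)([\Sigma_1]-[\Sigma_2])$ directly (a hands-on generalization of the classical argument in Geiges). You instead work entirely at the level of obstruction theory: you take the identity $\widetilde{r}_\Sigma(K,J)=e(\xi,v_K\cup v_J)([\Sigma])$ recorded in the definition as the starting point, use linearity of the evaluation pairing and the surface-independence of the relative Euler class, locate $[\Sigma_1]-[\Sigma_2]$ in the image of $j_*\colon H_2(M)\to H_2(M,K\cup J)$ via the long exact sequence (with injectivity from $H_2(K\cup J)=0$), and finish with the adjunction $\langle e(\xi,s),j_*a\rangle=\langle j^*e(\xi,s),a\rangle$ together with $j^*e(\xi,s)=e(\xi)$. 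Your approach buys brevity and robustness: it sidesteps the delicate disc-and-band bookkeeping and makes explicit the point (which the paper leaves implicit) that $[\Sigma_1]-[\Sigma_2]$ must be promoted to an absolute class before $e(\xi)$ can be evaluated on it. What it costs is self-containedness: it leans on the unproved identification of $\widetilde{r}_\Sigma$ with a relative Euler class evaluation and on the standard but uncited fact that $j^*$ carries the relative Euler class to the absolute one, whereas the paper's computation verifies the winding-number statement from scratch. Your closing sketch of a surface-resolution argument in the style of Lemma \ref{independent} is closer in spirit to what the paper actually does, though the paper's version proceeds by gluing $\Sigma_1$ to $-\Sigma_2$ rather than by resolving intersections with a closed surface $A$.
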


Notice that if $[\Sigma_1]=[\Sigma_2]$, the lemma shows that the relative rotation number is independent of the particular relative homology representative.

\begin{proof} This is a direct generalization of the classical argument (see \cite{geiges}). Consider the closed oriented surface $\Sigma_1\cup(-\Sigma_2)$, the union taken along $K\cup J$, which is oriented as the boundary of $\Sigma$. The two trivializations $\xi\rvert_{\Sigma_1}$ and $\xi\rvert_{\Sigma_2}$ coincide along a segment $K_1$ on $K$ and a segment $J_1$ on $J$, which means that we have a trivialization of $\xi$ over $\Sigma_1\cup-\Sigma_2$ with a 2-disc $D_K$ removed and a 2-disc $D_J$ removed, where $D_K\cap K=K\setminus K_1$ and $D_J\cap J=J\setminus J_1$. Since $K$ and $J$ are oriented as boundary components of $\Sigma_1$, there are compatible orientations for $D_K$ and $D_J$ which coincide with their natural orientation from $\Sigma_1\cup-\Sigma_2$. Over $D_K$ and $D_J$, there exist unique trivializations of $\xi$ up to homotopy. Assume that the trivialization $\xi\rvert_{D_K}$ coincides along $D_K\cap K$ with the trivialization defined by the positive tangent vector $v_K$ to $K$, and similarly, that the trivialization $\xi\rvert_{D_J}$ and the trivialization of $\xi$ over $D_J$ given by the positive tangent vector $v_J$ along $J$. Then by definition we have $\widetilde{r}_{\Sigma_1}(K,J)=w_{\sigma_1}(v_K)-w_{\sigma_1}(v_J)$ and $\widetilde{r}_{\Sigma_2}(K,J)=w_{\sigma_2}(v_K)-w_{\sigma_2}(v_J)$, where $\sigma_i$ denotes the trivialization $\xi\rvert_{\Sigma_i}$. Therefore, we have
$$\widetilde{r}_{\Sigma_1}(K,J)-\widetilde{r}_{\Sigma_2}(K,J)=w_{\sigma_1}(v_K)-w_{\sigma_1}(v_K)-w_{\sigma_2}(v_K)+w_{\sigma_2}(v_K).$$
Since $-\widetilde{r}_{\Sigma_2}(K,J)=\widetilde{r}_{-\Sigma_2}(-K,-J)$, the above expression becomes
$$w_{\sigma_1}(v_K)+w_{\sigma_2}(-v_K)-\Big{(}w_{\sigma_1}(v_J)+w_{\sigma_2}(-v_J)\Big{)}.$$

First, consider $w_{\sigma_1}(v_K)+w_{\sigma_2}(-v_K)$. The first winding number is equal to the number of rotations of $v_K$ relative to $\sigma_1$ along $K\cap D_K$, which is oriented homotopic rel endpoints to the segment $\partial D_K\cap\Sigma_1$. The second winding number represents the number of rotations of $-v_K$ along $-K\cap D_K$, which is homotopic rel boundary to the segment $\partial D_K\cap-\Sigma_2$.

We have analogous interpretation of the term $w_{\sigma_1}(v_J)+w_{\sigma_2}(-v_J)$.

Next take a segment $\alpha\subset\Sigma_1\cup (-\Sigma_2)$ from a point on $\partial D_K$ to a point on $\partial D_J$ and consider a neighborhood $N(\alpha)$ of $\alpha$, whose boundary consists  of two arcs parallel to $\alpha$ and of an arc $\alpha_K$ along $\partial D_K$ such that $\xi\rvert_{\alpha_K}$ rotates once and, similarly, an arc $\alpha_J$ along $\partial D_J$ such that $\xi\rvert_{\alpha_J}$ also rotates once. Since $\xi\rvert_{N(\alpha)}$ is trivial, $\xi$ rotates in the same way along $\alpha_K$ and $\alpha_J$, however, the orientations of $\alpha_K$ and $\alpha_J$ are reversed by an orientation-preserving isotopy between them through the product structure $\alpha\times[0,1]\cong N(\alpha)$. Therefore, $N(\alpha)$ does not modify the value of the quantity 
$$w_{\sigma_1}(v_K)+w_{\sigma_2}(-v_K)-\Big{(}w_{\sigma_1}(v_J)+w_{\sigma_2}(-v_J)\Big{)}.$$

Then the Euler number $e(\xi)([\Sigma_1]-[\Sigma_2]$ measures the number of rotations of the trivialization of $\xi$ over $(\Sigma_1\cup -\Sigma_2)\setminus (D_K\cup N(\alpha)\cup D_J)$ along the boundary of the 2-disc $D_K\cup N(\alpha)\cup D_J$ relative to a constant vector field which extends $v_K$ and $-v_J$ over $N(\alpha)$. This is equal to the number of rotations of the trivialization of $\xi$ over $(\Sigma_1\setminus D_K)\cup (-\Sigma_2\setminus D_J)$ over $\partial D_K\cup D_J$ with respect to extensions of $v_K$ and $-v_J$, respectively. Therefore, $\widetilde{r}_{\Sigma_1}(K,J)-\widetilde{r}_{\Sigma_2}(K,J)=e(\xi)([\Sigma_1]-[\Sigma_2])$.

A direct way of obtaining this result is to choose $\sigma_i$ so that they are both zero along $J$ so the argument directly reduces to the case when we compare the rotations of the trivializations only along $K$, which is just the classical case.
\end{proof}

For the relative self-linking number, consider the transverse knots $K$ and $J$ oriented as boundary components of the oriented surface $\Sigma$, over which $\xi$ is trivial and, in particular, it is cooriented along $\partial\Sigma$. Then $\widetilde{sl}_\Sigma(K,J)$ depends on how the orientations of the components of $\partial\Sigma$ compare with orientation of (the normal direction of) the contact planes, so reversing the orientation of either $\Sigma$ or $\xi\rvert_\Sigma$ causes $\widetilde{sl}_\Sigma(K,J)$ to reverse sign. 

\begin{lemma}\label{independent3}
Consider a contact 3-manifold $(M,\xi)$ and homologous transverse knots $K$ and $J$ in $M$ oriented as $K\cup J=\partial\Sigma$ for an embedded oriented surface $\Sigma\subset M$ so that $[\partial\Sigma]=[K]-[J]$. The relative self-linking number $\widetilde{sl}_\Sigma(K,J)$ does not depend on the choice of trivialization of $\xi\rvert_\Sigma$. 
\end{lemma}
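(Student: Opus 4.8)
The plan is to mimic the argument used for the relative rotation number in Lemma \ref{trivializationdependence}, observing that the self-linking number is also computed from winding-number data along $K$ and $J$ that is anchored by the tangent vector fields $v_K$ and $v_J$. First I would recall that $\widetilde{sl}_\Sigma(K,J) = K'\cdot\Sigma - J'\cdot\Sigma$, where $K'$ and $J'$ are push-offs of $K$ and $J$ in the direction of a nonzero section $v$ of $\xi\rvert_\Sigma$, and that, equivalently, this is the obstruction to extending the vector field $v_K$ along $K$ (the positive normal to $\Sigma$ inside $\xi$, pointing out of $\Sigma$) to a nonzero section of $\xi\rvert_\Sigma$, and similarly for $v_J$. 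So for a fixed trivialization $\sigma\colon\xi\rvert_\Sigma\to\Sigma\times\R^2$, we have $K'\cdot\Sigma = w_\sigma(v_K)$ and $J'\cdot\Sigma = w_\sigma(v_J)$, hence $\widetilde{sl}_\Sigma(K,J) = w_\sigma(v_K) - w_\sigma(v_J)$, with $v_K$ and $v_J$ determined geometrically by $\Sigma$ and $\xi$ and \emph{not} by the choice of trivialization.

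Next I would take two trivializations $\sigma_1,\sigma_2$ of $\xi\rvert_\Sigma$ coming from nonzero sections $Y_1,Y_2$, and compare. As in Lemma \ref{trivializationdependence}, on $K\cup J$ there is a gauge transformation $f\colon K\cup J\to SO(2)\cong S^1$ with $Y_1 = f\,Y_2$, so the winding numbers transform by the degree of $f$ restricted to each component: $w_{\sigma_1}(v_K) = w_{\sigma_2}(v_K) - \deg(f,K)$ and $w_{\sigma_1}(v_J) = w_{\sigma_2}(v_J) - \deg(f,J)$. The crucial point — identical to the rotation-number case — is that $\deg(f,K) = \deg(f,J)$ because the restriction of $f$ to either component is a map $S^1\to S^1$ whose degree is computed by the induced map $H_1(S^1)\cong\Z\to\Z\cong H_1(S^1)$, and both $[K]$ and $[J]$ are the standard generators; more structurally, $f$ extends over neither disc but its homotopy class is pulled back from a single $S^1$, so it has a well-defined degree that both restrictions realize. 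Subtracting, the $\deg(f,\cdot)$ terms cancel and $\widetilde{sl}_\Sigma(K,J,\sigma_1) = \widetilde{sl}_\Sigma(K,J,\sigma_2)$.

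The only step requiring a little care — and what I'd flag as the main (minor) obstacle — is justifying that the geometric data defining the push-offs, i.e.\ the vector fields $v_K$ and $v_J$, are genuinely trivialization-independent in the transverse setting: one must check that "push-off in the direction of the section $v$" produces the same homology class in $H_1(M\setminus K)$ (resp.\ $H_1(M\setminus J)$) regardless of which nonzero section of $\xi\rvert_\Sigma$ is chosen, since any two such sections are homotopic through nonzero sections over the disc-like pieces and differ only by a gauge transformation along the boundary. Once this is phrased as "$v_K$ is the out-of-$\Sigma$ direction inside $\xi$ along $K$, independent of $\sigma$," the winding-number bookkeeping above goes through verbatim, and one could alternatively, as in the remark closing the proof of Lemma \ref{independent2}, choose both $\sigma_i$ to agree with $v_J$ along $J$ so that the comparison reduces to the classical one-component statement along $K$.
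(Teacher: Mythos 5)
Your proposal is correct and takes essentially the same route as the paper: the paper's proof of Lemma \ref{independent3} simply says it follows by the same argument as Lemma \ref{trivializationdependence}, and your write-up is a faithful expansion of exactly that degree-cancellation argument, with the appropriate adjustment that in the transverse case the anchoring vector field is the out-of-$\Sigma$ direction in $\xi\cap T\Sigma$ rather than the tangent to the knot.
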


\begin{proof} 
Follows by the same argument as in the proof of Lemma \ref{trivializationdependence}.
\end{proof}

\begin{lemma} 
Let $\Sigma_1$ and $\Sigma_2$ be Seifert surfaces for the homologous oriented transverse knots $K$ and $J$ in a contact 3-manifold $M$ representing the relative homology classes $[\Sigma_1],[\Sigma_2]\in H_2(M,K\cup J;\Z)$ so that $[\partial\Sigma_i]=[K]-[J]$. Then
$$\widetilde{sl}_{\Sigma_1}(K,J)-\widetilde{sl}_{\Sigma_2}(K,J)=e(\xi)([\Sigma_1]-[\Sigma_2].$$ 
\end{lemma}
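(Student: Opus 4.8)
The plan is to mirror the proof of Lemma \ref{independent2} almost verbatim, replacing the Legendrian trivialization by tangent vectors with the transverse data. First I would form the closed oriented surface $\Sigma_1\cup(-\Sigma_2)$, glued along $K\cup J$ and oriented as the boundary of this union, and recall from the definition of $\widetilde{sl}$ that on each $\Sigma_i$ we have a nonzero section $v_i$ of $\xi\rvert_{\Sigma_i}$ whose value along the knots determines the push-offs $K'$ and $J'$, hence $\widetilde{sl}_{\Sigma_i}(K,J)=K'\cdot\Sigma_i-J'\cdot\Sigma_i$. By Lemma \ref{independent3} the answer does not depend on which trivialization we pick on each piece, so I am free to choose $\sigma_1,\sigma_2$ that agree along a segment $K_1\subset K$ and a segment $J_1\subset J$; this produces a trivialization of $\xi$ over $(\Sigma_1\cup-\Sigma_2)$ with two discs $D_K,D_J$ removed, exactly as in Lemma \ref{independent2}.

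Next I would express the difference $\widetilde{sl}_{\Sigma_1}(K,J)-\widetilde{sl}_{\Sigma_2}(K,J)$ in terms of the rotation of $v_1$ relative to $v_2$ along $\partial D_K$ and $\partial D_J$, using the sign conventions fixed in the paragraph preceding Lemma \ref{independent3} (reversing $\Sigma$ or the coorientation of $\xi\rvert_\Sigma$ flips the sign). Just as in Lemma \ref{independent2}, I would join $\partial D_K$ to $\partial D_J$ by an arc $\alpha$, take a product neighborhood $N(\alpha)$, and observe that the contributions along the two boundary arcs $\alpha_K,\alpha_J$ of $N(\alpha)$ cancel because the orientation-preserving isotopy through $N(\alpha)\cong\alpha\times[0,1]$ reverses their orientations; this lets me merge $D_K$, $N(\alpha)$ and $D_J$ into a single disc $D=D_K\cup N(\alpha)\cup D_J$. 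The remaining quantity is the obstruction to extending the chosen section of $\xi$ from $\partial D$ over the rest of the closed surface $\Sigma_1\cup(-\Sigma_2)$, which is by definition the relative Euler number $e(\xi)\big([\Sigma_1]-[\Sigma_2]\big)$; note $[\Sigma_1]-[\Sigma_2]\in H_2(M;\Z)$ so this pairing with $e(\xi)\in H^2(M;\Z)$ makes sense. Assembling these identifications gives $\widetilde{sl}_{\Sigma_1}(K,J)-\widetilde{sl}_{\Sigma_2}(K,J)=e(\xi)([\Sigma_1]-[\Sigma_2])$, and as in that lemma I would note the shortcut of choosing both $\sigma_i$ so that the section is ``constant'' along $J$, reducing to the classical one-component computation of how $sl$ changes with the Seifert surface.

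The main obstacle, and the only place where care is genuinely needed, is bookkeeping the signs: unlike the rotation number, $\widetilde{sl}$ depends on how the orientation of each boundary component compares with the coorientation of $\xi$, so when I pass from $\Sigma_2$ to $-\Sigma_2$ and simultaneously reverse $K$ and $J$ I must check that the self-linking contributions transform by a single overall sign and not with component-dependent signs — this is exactly why the orientation convention $[\partial\Sigma]=[K]-[J]$ was imposed, and why both $K$ and $J$ flip together. Once that is pinned down, the cancellation over $N(\alpha)$ and the identification of the leftover with $e(\xi)([\Sigma_1]-[\Sigma_2])$ go through word for word as in Lemma \ref{independent2}, so I would keep this proof short and simply refer to that argument for the parts that are identical.

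\begin{proof}
The argument is identical to that of Lemma \ref{independent2}, with the nonzero sections of $\xi\rvert_{\Sigma_i}$ defining the transverse push-offs $K'$ and $J'$ playing the role of the trivializations there. Form the closed oriented surface $\Sigma_1\cup(-\Sigma_2)$, glued along $K\cup J$. By Lemma \ref{independent3} we may choose the trivializations $\sigma_i$ of $\xi\rvert_{\Sigma_i}$ so that they agree along a segment of $K$ and a segment of $J$; this gives a trivialization of $\xi$ over $\Sigma_1\cup(-\Sigma_2)$ with two discs $D_K$ and $D_J$ removed, with $D_K\cap K=K\setminus K_1$ and $D_J\cap J=J\setminus J_1$, oriented compatibly with $\Sigma_1\cup(-\Sigma_2)$. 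Then $\widetilde{sl}_{\Sigma_1}(K,J)-\widetilde{sl}_{\Sigma_2}(K,J)$ measures the total rotation of the section of $\xi$ along $\partial D_K$ and $\partial D_J$; here we use that reversing the orientation of $\Sigma$ or of the coorientation of $\xi\rvert_\Sigma$ reverses the sign of $\widetilde{sl}_\Sigma$, and that $K$ and $J$ are oriented so that $[\partial\Sigma]=[K]-[J]$, so passing to $-\Sigma_2$ together with $-K,-J$ changes $\widetilde{sl}_{\Sigma_2}$ by a single overall sign. Joining $\partial D_K$ to $\partial D_J$ by an arc $\alpha$ with product neighborhood $N(\alpha)\cong\alpha\times[0,1]$, the rotations of $\xi$ along the two arcs $\alpha_K\subset\partial D_K$ and $\alpha_J\subset\partial D_J$ cancel, since $\xi\rvert_{N(\alpha)}$ is trivial and the orientation-preserving isotopy through $N(\alpha)$ carrying $\alpha_K$ to $\alpha_J$ reverses their orientations. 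Hence $D:=D_K\cup N(\alpha)\cup D_J$ is a single disc, and $\widetilde{sl}_{\Sigma_1}(K,J)-\widetilde{sl}_{\Sigma_2}(K,J)$ equals the obstruction to extending the chosen section of $\xi$ from $\partial D$ over $(\Sigma_1\cup(-\Sigma_2))\setminus D$, which is by definition the relative Euler number $e(\xi)([\Sigma_1]-[\Sigma_2])$, where $[\Sigma_1]-[\Sigma_2]\in H_2(M;\Z)$. Therefore $\widetilde{sl}_{\Sigma_1}(K,J)-\widetilde{sl}_{\Sigma_2}(K,J)=e(\xi)([\Sigma_1]-[\Sigma_2])$. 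Alternatively, choosing both $\sigma_i$ to be constant along $J$ reduces the computation along $K$ to the classical dependence of the self-linking number on the Seifert surface.
\end{proof}
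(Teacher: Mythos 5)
Your proposal is correct and follows essentially the same route as the paper: form $\Sigma_1\cup(-\Sigma_2)$, arrange the trivializations to agree off two discs $D_K$ and $D_J$, cancel the contributions over a band $N(\alpha)$ joining them, and identify the residual rotation with $e(\xi)([\Sigma_1]-[\Sigma_2])$. The only difference is one of detail: the paper explicitly converts the intersection numbers $K'\cdot\Sigma_i$ and $J'\cdot\Sigma_i$ into winding numbers of the push-off direction along $K\cap D_K$ and $J\cap D_J$ via orientation-(re)versing homotopies to $\partial D_K\cap\Sigma_1$ and $\partial D_K\cap(-\Sigma_2)$, a step you assert rather than derive, but the substance is the same.
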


Notice that if $[\Sigma_1]=[\Sigma_2]$, the lemma shows that the relative self-linking number is independent of the particular relative homology representative. 

\begin{proof} 
The argument is similar to the proof of Lemma \ref{independent}. Assume without loss of generality that the coorientation of $\xi$ makes $K$ positively transverse. Again take the union $\Sigma_1\cup (-\Sigma_2)$ and arrange that the trivializations of $\xi\rvert_{\Sigma_1}$ and $\xi\rvert_{\Sigma_2}$ coincide along an interval $K_1=K\setminus D_K$ and an interval $J_1=J\setminus D_J$. Thus we obtain a trivialization of $\xi$ over $(\Sigma_1\cup -\Sigma_2)\setminus (D_K\cup D_J)$ given by a nonzero vector field $X$. Note that $\xi\rvert_{D_K\cup D_J}$ is also trivial, the trivialization of $\xi\rvert_{D_K}$ given by a constant vector field $v_K$ tangent to $K_1=K\cap D_K$ and the trivialization of $\xi\rvert_{D_J}$ given by a constant vector field $v_J$ tangent to $-J_1=-J\cap D_J$. Then, as in Lemma \ref{independent}, one can use a neighborhood $N(\alpha)$ of an arc between $\partial D_K$ and $D_J$ to reduce the computation of the trivialization of $\xi$ to the boundary of the 2-disc $D_K\cup N(\alpha)\cup D_J$ and see that it computes an Euler number. This Euler number $e(\xi)([\Sigma_1]-[\Sigma_2])$ measures the difference of the winding number of $X_K$ with respect to $X$ along $\partial D_K$ minus the winding number of $X_J$ with respect to $X$ along $\partial D_J$ (note that $\partial D_K$ and $\partial D_J$ are oriented as boundaries of $D_K$ and $D_J$ on the oriented $\Sigma_1\cup -\Sigma_2$, so the negative sign comes form the fact that $v_J$ is pointing in the direction of $-J$). 

Now let $K'$ and $J'$ be push-offs of $K$ and $J$, respectively, in the direction of $X$. Then by definition,
$$\widetilde{sl}_{\Sigma_1}(K,J)=lk_{\Sigma_1}(K',K)-lk_{\Sigma_1}(J',J)=K'\cdot\Sigma_1-J'\cdot\Sigma_1$$ and $$\widetilde{sl}_{\Sigma_2}(K,J)=lk_{\Sigma_2}(K',K)-lk_{\Sigma_2}(J',J)=K'\cdot\Sigma_2-J'\cdot\Sigma_2.$$ From the construction, there is no contribution to $K'\cdot\Sigma_i$ along $K_1=K\setminus D_K$ and there is no contribution to $J'\cdot\Sigma_i$ along $J_1=J\setminus D_J$ for $i=1,2$. Then along $K\cap D_K$, $K'\cdot\Sigma_i$ is equal to the winding number of $X$ relative to $X_K$ measured along this segment, because $K_1$ is homotopic, rel bondary, to $\partial D_K\cap\Sigma_1$, by an orientation-reversing homotopy, and $K_1$ is homotopic, rel bondary, to $\partial D_K\cap-\Sigma_2$ by an orientation-preserving homotopy. Therefore, this computes $$K'\cdot\Sigma_1-K'\cdot\Sigma_2.$$ Similarly, to measure the winding number of $X$ relative to $X_J$, we look at $J\cap D_J$, keeping in mind that $-J\cap D_J$ is homotopic to $\partial D_J\cap\Sigma_1$ by an orientation-preserving homotopy and $-J\cap D_J$ is homotopic to $\partial D_J\cap-\Sigma_2$ by an orientation-reversing homotopy. We are working with $-\Sigma_2$, so switching the signs, we obtain $$-J'\cdot\Sigma_1+J'\cdot\Sigma_2,$$ which computes precisely the last two remaining terms of the difference of the relative self-linking numbers coming from $\Sigma_1$ and $\Sigma_2$, completing the proof.
\end{proof}

\section{Seifert Surfaces and Smooth Isotopies}

In this section, we study the problem of finding a Seifert surface for oriented $K\cup J$ under smooth isotopies. Given a surface $\Sigma\subset (M,\xi)$ with $\partial\Sigma=K\cup J$, consider a smooth isotopy $\varphi_t(K), t\in[0,1],$ of $K$ which fixes $J$. If the isotopy lives in the complement of $J$, then it extends to a global isotopy by the Isotopy Extension Theorem, and, in particular, to an isotopy of $\Sigma$ through embedded surfaces. More formally, $\varphi_t(\Sigma)$ is a Seifert surface for $\varphi_t(K)\cup\varphi_t(J)$ for all $t\in[0,1]$. Note that there are versions of the Isotopy Extension Theorem for transverse and Legendrian isotopies. If $\varphi_t(K)$ intersects $\varphi_t(J)=J$ for some $t$, we show how to construct a surface for $\varphi_t(K)\cup\varphi_t(J)$ after the intersection occurs.

\begin{lemma}\label{construction} 
Consider homologous knots $K$ and $J$ in a contact 3-manifold $(M,\xi)$ oriented as boundary components of an embedded oriented surface $\Sigma\subset M$ so that $[\partial\Sigma]=[K]-[J]$, and consider a smooth isotopy $\varphi_t$ of $K$ which fixes $J$. Additionally, assume that $\varphi_t(K)$ intersects $J$ transversely in a point $p\in J$, that is, $\varphi_s(K)\pitchfork J=\{p\}$ and $\varphi_t(K)\cap J=\emptyset$ for all $t\neq s$. For small $\epsilon>0$, let $K_\pm=\varphi_{s\pm\epsilon}(K)$ and let $\Sigma_-=\varphi_{s-\epsilon}(\Sigma)$, then there exists an oriented embedded surface $\Sigma_+\subset M$ with $\partial\Sigma_+=K_+\cup J$.
\end{lemma}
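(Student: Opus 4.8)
The plan is to track exactly how the surface $\Sigma$ must change as $K$ crosses $J$ at the single transverse intersection point $p$, and to produce $\Sigma_+$ by a local surgery near $p$ together with a correction that absorbs the change in relative homology class. First I would set up coordinates: since $\varphi_s(K) \pitchfork J = \{p\}$, choose a small ball $B$ around $p$ in which $J$ is a straight arc, $\varphi_t(K)$ for $t$ near $s$ sweeps across $J$ transversely, and $\Sigma_- = \varphi_{s-\epsilon}(\Sigma)$ meets $\partial B$ in a controlled way. Just before the crossing, $K_- = \varphi_{s-\epsilon}(K)$ is disjoint from $J$, and $\Sigma_-$ is an honest Seifert surface for $K_- \cup J$. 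The issue is that pushing $\Sigma_-$ forward by $\varphi_{s+\epsilon}$ is not allowed, because the isotopy does not live in the complement of $J$; near time $s$ the surface $\varphi_t(\Sigma)$ would have to pass through $J$, creating a non-embedded object or a surface whose boundary no longer contains $J$ cleanly.

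The key step is the local model. Inside $B$, the piece of $\Sigma_-$ near $p$ looks like a band of $K_-$ running up to and along $J$ (since $p \in J$ and $J \subset \partial \Sigma_-$). When $K$ crosses $J$, I would replace this local piece: remove a small disc neighborhood in $\Sigma_-$ of the arc of $\partial \Sigma_-$ lying along $J$ near $p$, and reglue so that the new boundary arc follows $K_+$ on the other side of $J$, attaching a small half-tube that runs along $J$ to reconnect the two boundary pieces of $J$ that got separated. Orientations force a unique way to do this regluing (exactly as in the smoothing constructions of Lemma~\ref{independent}, Figures~\ref{f10} and \ref{f11}), and one must check the result is embedded: the half-tube is pushed slightly off $\Sigma_-$ into a collar direction, and since the crossing was a single transverse point, no new self-intersections of the surface are introduced. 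This yields an embedded oriented surface $\Sigma_+'$ with $\partial \Sigma_+' = K_+ \cup J$, oriented so that $[\partial\Sigma_+'] = [K_+] - [J]$, which is what is claimed. I expect this local embeddedness/orientation-compatibility verification to be the main obstacle, since one has to argue carefully that the surgered surface closes up correctly along $J$ without creating a closed component or a boundary orientation mismatch; drawing the local picture and invoking the earlier figure-based arguments is the honest way to handle it.

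Finally I would remark on the relationship between $\Sigma_+$ and $\Sigma_-$: away from $B$ they agree, so the two surfaces differ by a local model supported near $p$, and in particular the change in $[\Sigma]$ is concentrated near the crossing point. This is exactly the input needed later to compare the relative invariants before and after the crossing, since by Lemmas~\ref{independent} and \ref{independent2} (and their transverse analogues) the relative invariants are insensitive to the relative homology class, so it will suffice to control the \emph{local} effect of the surgery on the framings and winding numbers along $K$ and $J$ — which is precisely the kind of computation carried out in Lemma~\ref{independent}. If convenient, one can also take $\Sigma_+ = \Sigma_+'$ possibly modified by tubing on a closed surface to land in any prescribed class of $H_2(M, K_+ \cup J)$, but the bare existence statement only requires the surgered surface itself.
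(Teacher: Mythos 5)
There is a genuine gap: your construction is claimed to be supported in a small ball $B$ around $p$, with $\Sigma_+$ agreeing with $\Sigma_-$ outside $B$, and that localization is unjustified and false in general. The obstruction is visible as soon as you try to make your ``half-tube'' precise. Since $J\subset\partial\Sigma_-$, any surface whose boundary strand has moved to the other side of $J$ near $p$ must cross the sheet of $\Sigma_-$ attached along $J$; the resulting intersection is an arc emanating from $p$ which, traced through $\Sigma_-$, is a properly embedded arc running from $J$ all the way to the other boundary component $K_-$ (this is the singular clasp $\alpha$ in the paper's proof). That arc can wind arbitrarily far through $\Sigma_-$ and need not stay in any prescribed ball --- indeed $\Sigma_-\cap B$ need not even be connected, so the arc can exit $\partial B$, and then resolving it destroys the agreement with $\Sigma_-$ on $\partial B$ that your argument relies on. A second, independent source of non-locality is that $K_+$ generically punctures the interior of $\Sigma_-$ in $\lvert K_+\cdot\Sigma_-\rvert$ points distributed along the whole knot, producing clasp intersections (the $\beta_i$) that also have to be resolved; your proposal never addresses these. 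The paper's route is different and global: it takes an embedded annulus $A$ with $\partial A=-K_-\cup K_+$ arranged so that $A\pitchfork J=\{p\}$, and builds $\Sigma_+$ by systematically resolving \emph{all} of $\Sigma_-\cap A$ (circles, ribbon arcs, the clasps $\beta_i$, and the singular clasp $\alpha$) by oriented cut-and-paste.

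Two further remarks. First, the bare existence statement of Lemma~\ref{construction} is actually immediate from the homology argument at the start of Section~4 ($K_+$ is isotopic to $K_-$, hence homologous to $J$, hence $K_+\cup J$ bounds); the real content of the lemma is the \emph{specific} surface built from $\Sigma_-$ and $A$, whose framing and winding-number changes are computed in Lemmas~\ref{twistingnearbeta} and \ref{twistingnearalpha} and then used in Theorem~\ref{legendriantransverseinvariance}. Second, your closing appeal to Lemmas~\ref{independent} and \ref{independent2} to argue that the choice of $\Sigma_+$ does not matter is too quick: only the relative Thurston--Bennequin number is independent of the relative homology class; the relative rotation and self-linking numbers change by $e(\xi)$ of the difference class, which is exactly the ambiguity in Theorem~\ref{main}(b),(c), so the downstream arguments do need to track the particular $\Sigma_+$ produced by the $A$-resolution.
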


\begin{proof}
Since $K_-$ and $K_+$ are isotopic, they are homologous, so there exists an embedded oriented surface $A$ with $\partial A=-K_-\cup K_+$, and we can arrange that $A\pitchfork J=\{p\}$. We construct $\Sigma_+$ out of the surfaces $A$ and $\Sigma_-$ by eliminating their intersections. Intersection arcs that run boundary-to-boundary on one surface and intersect only the interior of the other surface are called \textit{ribbon arcs} (Figure \ref{f1}).

Ribbon arcs can be eliminated by locally pushing the interior of the one surface (e.g., $\Sigma_2$ in Figure \ref{f4}) which contains the arc in its interior across the part of the other surface (e.g., $\Sigma_1$ in Figure \ref{f4}) that the arc bounds with the boundary (this requires an innermost-arc-first procedure).

\begin{figure}[!ht]
\centering
\includegraphics[scale=.3]{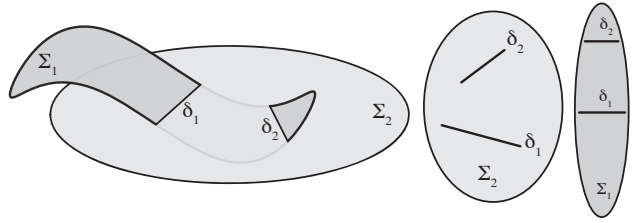}
\caption{Ribbon intersections $\delta_1$ and $\delta_2$.}\label{f1}
\end{figure} 

If this part of surface $\Sigma_2$ contains any other intersections with $\Sigma_1$ (circles), eliminate them first by locally isotoping the interior of $\Sigma_2$ across (see Figure \ref{f2} for an example involving $\Sigma_-$ and $A$). 

\begin{figure}[!ht]
\centering
\includegraphics[scale=.3]{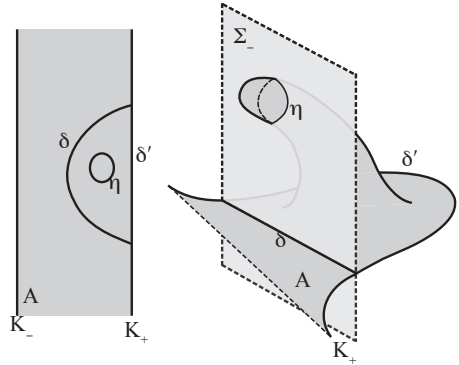}
\caption{Resolving an innermost trivial ribbon intersection $\delta$.}\label{f2}
\end{figure}

Note that for this to be always possible, we need $M$ to be irreducible. 

\begin{figure}[!ht]
\centering
\includegraphics[scale=.25]{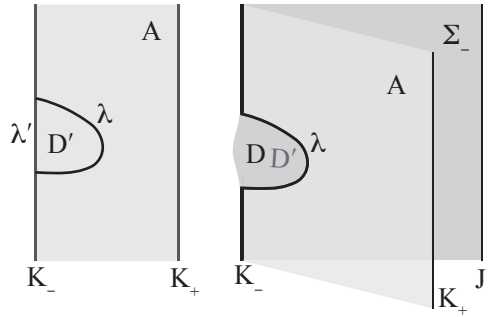}
\caption{Resolving an intersection arc $\lambda$ parallel to $K_-$.}\label{f3}
\end{figure}

However, $A$ lives in a solid torus (which is irreducible) neighborhood and all the isotopies of the interior of $\Sigma_-$ can be restricted to live in this solid torus, so no such general assumptions for $M$ are necessary.

So we assume there are no ribbon intersections and next eliminate non-ribon intersection arcs (parallel to $K_-$) by isotoping the interior of $\Sigma_-$ locally across the half-disc that such arc bounds with $K_-$ (see Figure \ref{f3}).

Next we eliminate circle intersections between $\Sigma_-$ and $A$, which we claim can not be  multiples of $[K_+]\in H_1(A;\Z)$. Assume that $\eta\subset A$ is a multiple of $[K_-]$. Recall that $J\cap A=\{p\}$, which produces the intersection arc $\alpha$ that runs from $J$ to $K_-$ on $\Sigma_-$ and $p$ to the boundary $K_-$ on $A$ (Figure \ref{f4}). 

\begin{figure}[!ht]
\centering
\includegraphics[scale=.35]{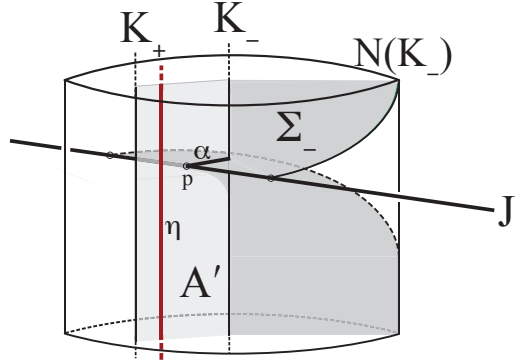}
\caption{A non-trivial circle intersection $\eta\subset \Sigma_-\cap A$.}\label{f4}
\end{figure}

Take a framed neighborhood $N(A)$ of $A$ so that $K_-$ is the inverse image of the core under a diffeomorphism from $N(A)$ t the solid torus (see \cite{geiges} or Theorem 4.1.12 in \cite{ozbagcistip}) with the framing given by $Fr_A$. For a small $\epsilon>0$, $A\subset N(K_-)$ so $K_+\subset N(K_-)$ is a parallel copy of $K_-$. If there are multiple non-trivial circles, let $\eta$ denote the one ``closest'' to $K_-$ (as in Figure \ref{f4}). Let $\Sigma_-'$ denote the part of $\Sigma_-$ bounded by $K_-$ and $\eta$. It has no other boundary components so let $\Sigma_-''=\Sigma_-'\cup A'$. This is a closed oriented surface. Since $J$ is closed of complementary dimension to $\Sigma_-''$, their geometric intersection is 0. So $J$ intersects $\Sigma_-''$ at another point besides $p$, and since $J\cap A=\{p\}$, $J$ intersects $\Sigma_-'$. But $\Sigma_-$ is embedded and cannot self-intersect.

An intersection arc is a \textit{clasp} if it runs from boundary-to-interior on each surface and a \textit{singular clasp} if it runs boundary-to-boundary on one surface (Figure \ref{f5}). 

\begin{figure}[!ht]
\includegraphics[scale=.28]{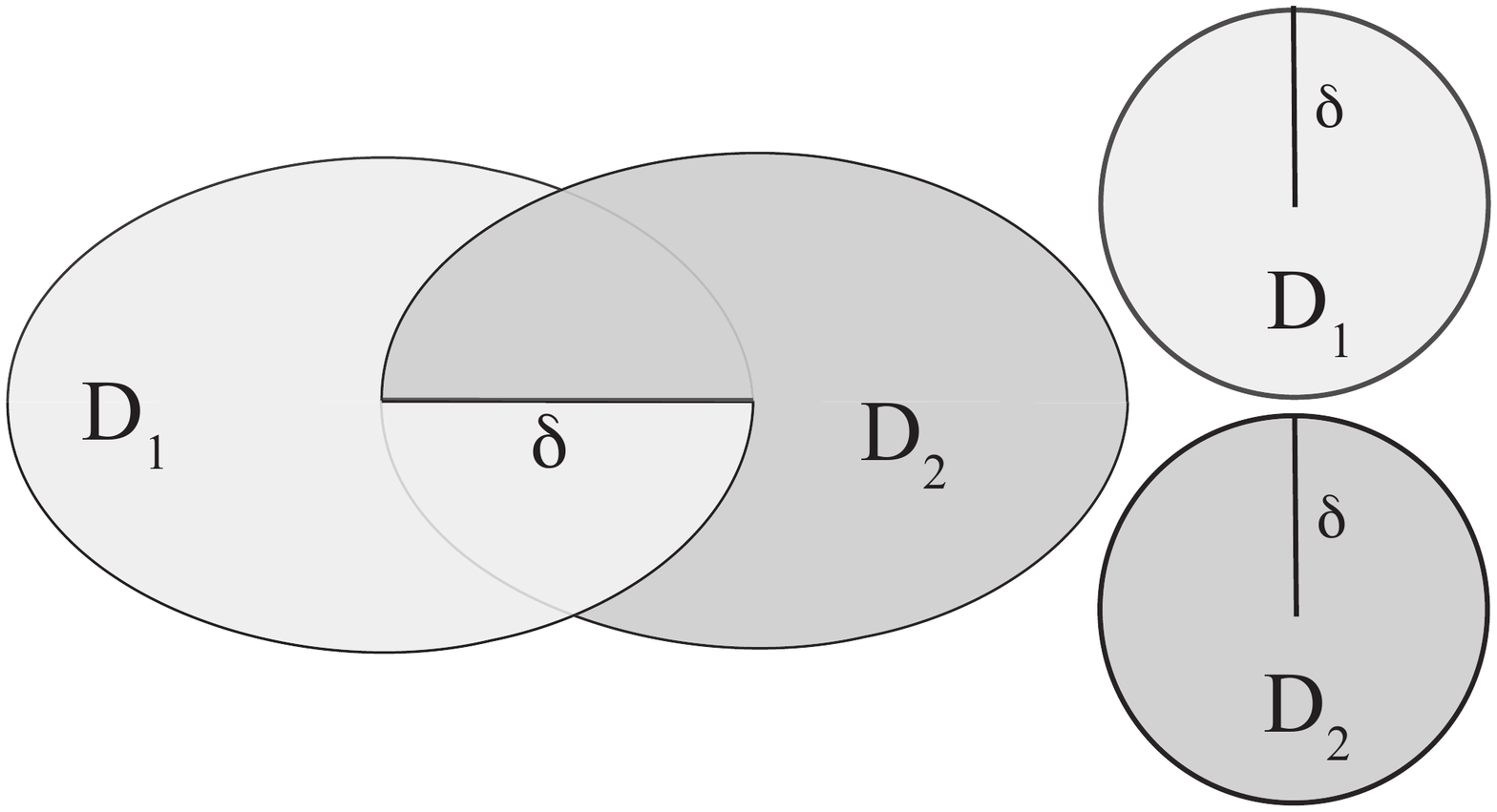}
\includegraphics[scale=.28]{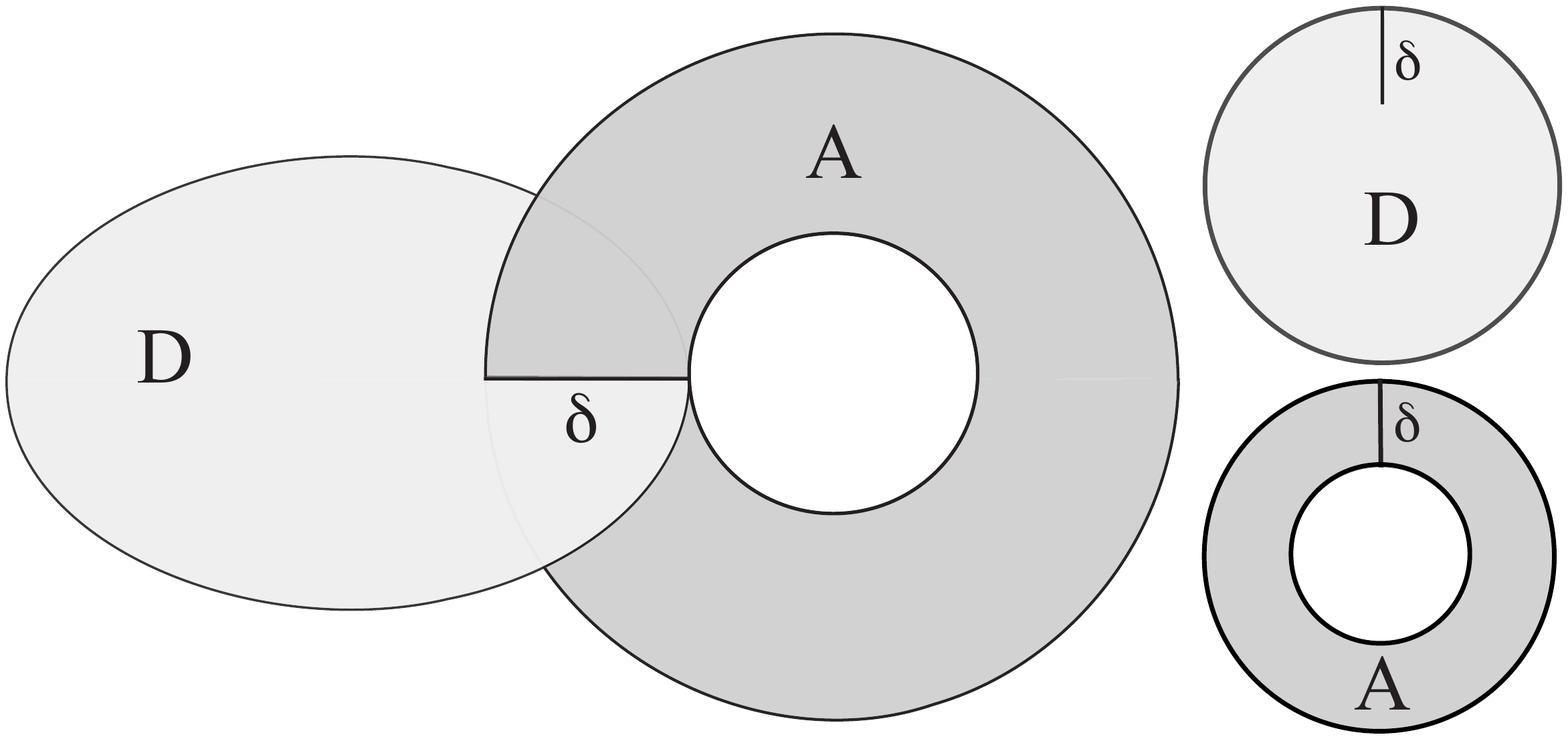}
\caption{A clasp $\delta=D_1\cap D_2$ and a singular clasp $\delta=D\cap A$.}\label{f5}
\end{figure} 

There are exactly $\rvert K_+\cdot\Sigma_-\rvert$ clasps $\beta_i$ and exactly one singular clasp $\alpha$ in $\Sigma_-\cap A$ ($\alpha$ is unique since another such arc $\alpha'$ would make $p\in\alpha\cap\alpha'$ a self-intersection of the embedded $\Sigma_-$). 

Clasps (and singular clasps) can be resolved standardly to give an embedded oriented surface (see Figures \ref{f6_0} and \ref{f6}). 

\begin{figure}[!ht]
\centering
\includegraphics[scale=.43]{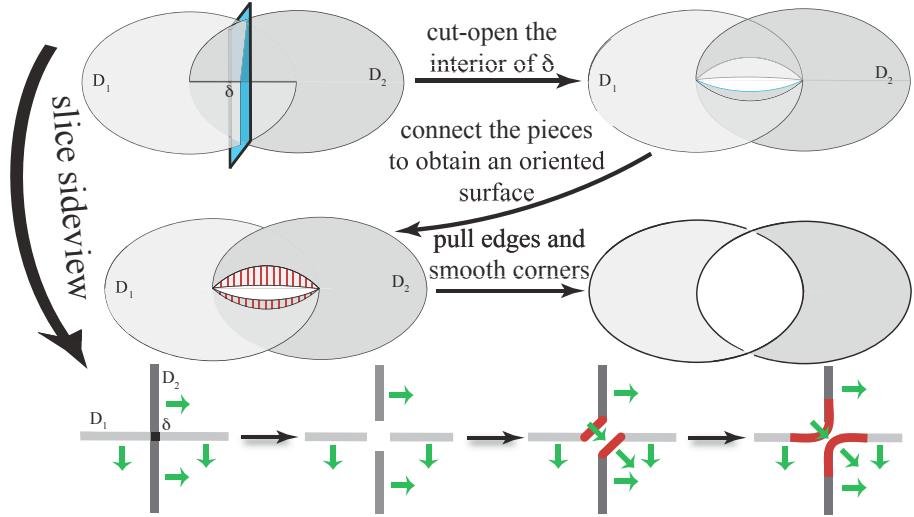}
\caption{Resolving a clasp arc. Green arrows show orientations.}\label{f6_0}
\end{figure}

It is an \textit{orientation preserving  cut} (see \cite{papi}), a \textit{smoothing of a singular arc in an orientable way}, or an \textit{Umschaltung} (M. Dehn). It consists of cutting open the interior of the clasp and joining the edges appropriately, whereby an endpoint of the clasp becomes a singular point. Cut open at this point (see Figure \ref{f6} for an example) to obtain an embedded oriented surface.

\begin{figure}[!ht]
\centering
\includegraphics[scale=.43]{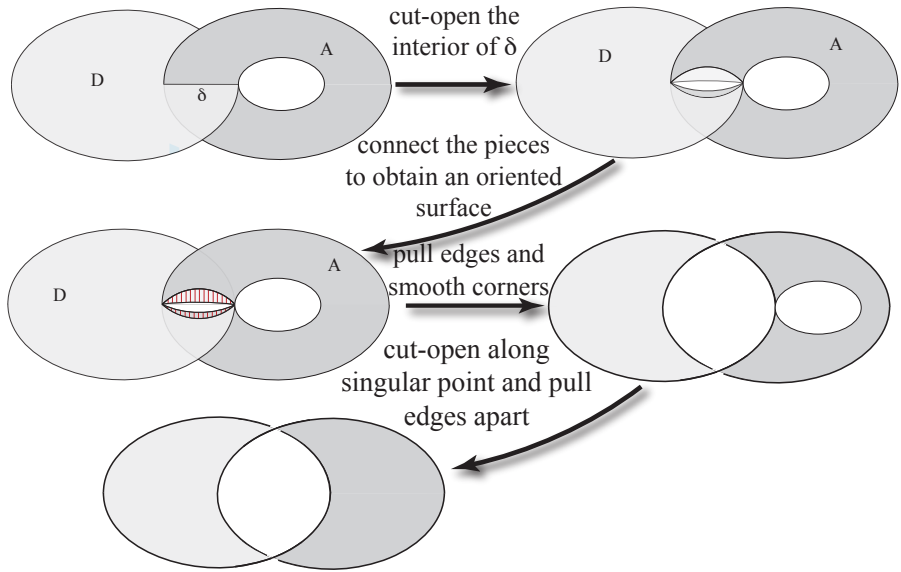}
\caption{Resolving a singular clasp arc.}\label{f6}
\end{figure}

Consider the clasps $\beta_i$. Again, we take a framed neighborhood $N(K_-)$ of $K_-$ with the framing $Fr_A$. Cut open the interiors of $\alpha$ and each $\beta_i$ (e.g., Figure \ref{f7}).

\begin{figure}[!ht]
\includegraphics[scale=.45]{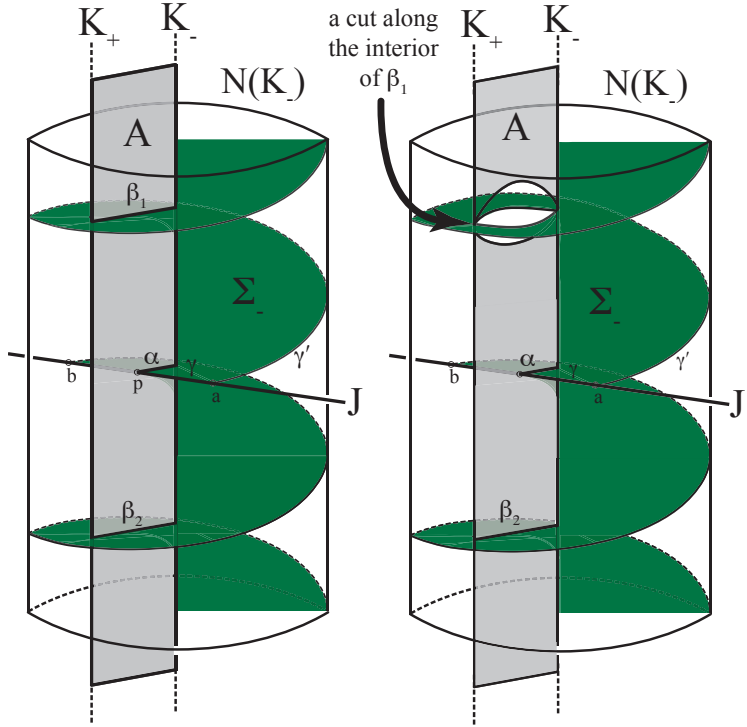}
\caption{The singular clasp $\alpha$ and the clasps $\beta_i$ with a cut along $\beta_1$.}\label{f7}
\end{figure}

Next join the edges accordingly (e.g., see Figure \ref{f8}) and cut along singular points to obtain an embedded oriented surface.

\begin{figure}[!ht]
\includegraphics[scale=.43]{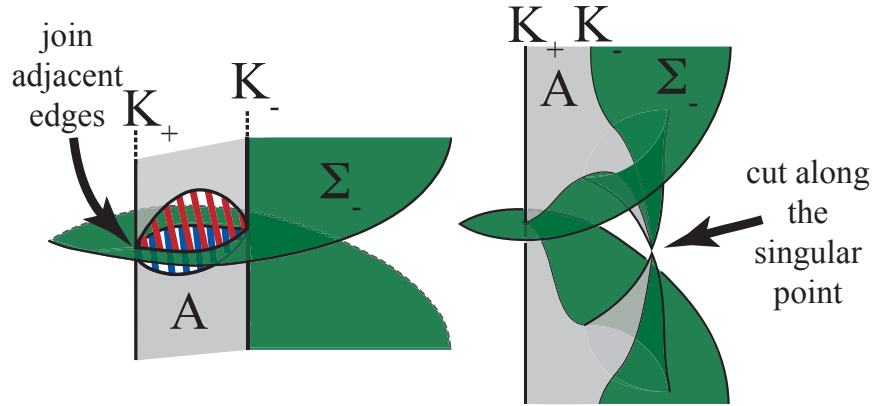}
\caption{Joining the edges and eliminating the singular point after a cut along the clasp $\beta_1$.}\label{f8}
\end{figure}

More precisely, two edges of a cut-open clasp are \textit{adjacent} if, given any two neighborhoods $U_{\Sigma_-}$ and $U_A$ of these edges in the respective surface, $K_-\cap(U_{\Sigma_-}\cap U_A)$ is a nonempty segment along $K_-$, properly containing the common boundary point of the two edges. Near two adjacent edges, the surfaces $\Sigma_-$ and $A$ are also ``adjacent''. We resolve the clasp arcs $\beta_i$ and the singular clasp $\alpha$ by joining adjacent edges (look at Figures \ref{f7} and \ref{f8}).

This produces an embedded oriented surface $\Sigma_+$ with $\partial\Sigma_+=K_+\cup J$.
\end{proof}

In the construction of Lemma \ref{construction} above, resolving all other possible intersections besides the clasps required only isotoping the interior of $\Sigma_-$ locally, so we may assume that such intersections do not occur. 

\begin{lemma}\label{twistingnearbeta} 
Consider a standard framed neighborhood $N(A)$ with the product framing on $K_-$ induced by $Fr_A$ as in the construction of Lemma \ref{construction} prior to resolving the clasp intersections between $\Sigma_-$ and $A$. Denote by $Fr_{N(K_-)}$ the product framing  induced on the core $K_-$ and its parallel translation $K_+$. Then
$$tw_{K_-}(Fr_{\Sigma_-},Fr_{N(K_-)})=tw_{K_+}(Fr_{\Sigma_+},Fr_{N(K_-)})+J\cdot A.$$
\end{lemma}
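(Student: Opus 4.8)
\textbf{Proof plan for Lemma \ref{twistingnearbeta}.}

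The plan is to track how the Seifert framing changes under the clasp-resolution step of Lemma \ref{construction}, working entirely inside the solid-torus neighborhood $N(A) \cong N(K_-)$ where $A$, and hence $K_+$, lives as a parallel push-off of the core $K_-$. Before resolving, I would set up coordinates: $N(K_-) \cong S^1 \times D^2$ with the product framing $Fr_{N(K_-)}$, and $A$ sitting inside as (a piece of a) once-around longitudinal annulus, so $K_+ = \partial A \setminus K_-$ is $K_-$ pushed off using $Fr_A$; in particular $tw_{K_+}(Fr_A, Fr_{N(K_-)}) = 0$ by construction of the framed neighborhood. The surface $\Sigma_-$ enters this neighborhood along arcs, creating the $|K_+ \cdot \Sigma_-|$ clasps $\beta_i$ and the single singular clasp $\alpha$, the latter carrying the intersection point $p = J \cap A$.

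The key computation is local and is organized as a count over the clasp arcs. Each clasp resolution (Figures \ref{f6_0}, \ref{f6}) is a cut-and-rejoin that splices a band of $\Sigma_-$ together with a band of $A$ along $K_-$; since $A$ contributes $0$ twisting relative to $Fr_{N(K_-)}$, each such splice transfers to $\Sigma_+$ exactly the local twisting that $\Sigma_-$ had relative to $Fr_{N(K_-)}$ near that arc, \emph{except} at the singular clasp $\alpha$. The singular clasp is the one arc that runs boundary-to-boundary on one surface, and its resolution introduces the extra singular point that must be cut open; resolving it changes the longitude class of the new boundary component by exactly the algebraic count of how $A$ (equivalently $J$, since $\partial$-parallel pieces of $A$ near $p$ track $J$) passes through, which is $J \cdot A$. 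So the heart of the argument is: (i) show the ordinary clasps $\beta_i$ contribute no net change to the twisting comparison because $A$'s bands are untwisted in $Fr_{N(K_-)}$; and (ii) show the singular clasp $\alpha$ contributes precisely $J \cdot A$ full twists. Step (ii) is where I would draw the local model carefully from Figures \ref{f7} and \ref{f8}: the cut along $\beta_1$ and the elimination of the singular point reroute a collar of $\partial \Sigma_+$ around $K_-$, and each strand of $J$ threading $A$ forces one additional $2\pi$ wrap of $Fr_{\Sigma_+}$ relative to $Fr_{N(K_-)}$, with sign matching the orientation of the intersection — hence $+ J\cdot A$ rather than $|J \cdot A|$.

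To make this rigorous I would phrase twisting additively: for a knot $L$ bounding a surface inside $N(K_-)$, $tw_L(Fr_{\text{surface}}, Fr_{N(K_-)})$ is the winding number of the surface framing around the product framing, and this winding number is additive under the local cut-and-rejoin moves since each move is supported in a ball meeting $K_-$ in a single sub-arc. Summing the local contributions: $\Sigma_-$ contributes $tw_{K_-}(Fr_{\Sigma_-}, Fr_{N(K_-)})$ in total, the resolutions of the $\beta_i$ preserve this, and resolving $\alpha$ subtracts $J\cdot A$ from the tally carried by the new boundary $K_+$, i.e. $tw_{K_+}(Fr_{\Sigma_+}, Fr_{N(K_-)}) = tw_{K_-}(Fr_{\Sigma_-}, Fr_{N(K_-)}) - J\cdot A$, which rearranges to the claimed identity.

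\textbf{Expected main obstacle.} The delicate point is pinning down the \emph{sign} and \emph{exact magnitude} of the singular-clasp contribution — confirming that it is $J\cdot A$ full $2\pi$-twists and not, say, half-twists or $J\cdot A$ with the opposite sign, and that the ordinary clasps $\beta_i$ genuinely contribute nothing once $A$ is normalized to be framing-trivial in $N(K_-)$. This requires a careful orientation bookkeeping in the local picture of Figures \ref{f7}–\ref{f8}, in particular verifying that the "adjacent edges" joining rule from Lemma \ref{construction} is the one that produces an embedded oriented $\Sigma_+$ \emph{and} that the cut at the singular point of $\alpha$ is what picks up the $J$-strands. Everything else is a routine additivity-of-winding-numbers argument localized in the solid torus.
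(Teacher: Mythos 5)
Your plan matches the paper's proof in both structure and substance: localize the computation in the framed solid torus $N(K_-)$, show that the ordinary clasps $\beta_i$ contribute nothing to the twisting comparison, and isolate the entire discrepancy $J\cdot A$ in the resolution of the singular clasp $\alpha$. The only real difference is one of realization rather than idea: the paper encodes $Fr_{\Sigma_-}$ as a concrete push-off curve $K_-'$ into $\Sigma_-$ and computes the twisting as the intersection number $K_-'\cdot A$, sliding $K_-'$ across $K_+$ to clear the intersection points on the $\beta_i$ and then tracking how resolving $\alpha$ converts $K_-'$ into a push-off $K_-''$ of $K_+$ into $\Sigma_+$ — precisely the curve-level version of your additivity-of-winding-numbers argument, and it leaves the sign/magnitude check at $\alpha$ at essentially the same level of detail that you flag as the main obstacle.
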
 

\begin{proof}
Consider a push-off $K_-'$ of $K_-\subset\partial\Sigma_-$ into $\Sigma_-$ giving the Seifert framing $Fr_{\Sigma_-}$ along $K_-$ so that $tw_{K_-}(Fr_{\Sigma_-},Fr_{N(K_-)})=lk_A(K_-',K_-)=K_-'\cdot A$. The intersection $K_-'\cap A$ consists of a point on each clasp arc $\beta_i\subset\Sigma_-\cap A$ and a point on the singular clasp arc $\alpha\subset\Sigma_-\cap A$. Now, isotop $K_-'$ in $\Sigma_-$ through each arc $\beta_i$ across $K_+$ such that the intersection points along $\beta_i$ are eliminated and $K_-'$ intersects $A$ once along $\alpha$. This implies that away from a neighborhood of $\alpha$, $K_-'$ links with $K_-$ relative to the product framing $Fr_{N(K_-)}$ in the same way that it links with $K_+$ relative to the product framing $Fr_{N(K_-)}$ because $K_-'$ is disjoint from the annulus $A$ away from $\alpha$. 

Resolving the $\beta_i$ modifies the interior of $\Sigma_-$ away from $K_-'$. 
Resolving $\alpha$ modifies $K_-'$ into $K_-''$, which coincides with $K_-'$ away from a neighborhood of $\alpha$ and does not link with $K_+$ rel product framing, we obtain $K_-''$ from $K_-'$ by removing a small segment and adding another one, each isotopic to the other rel boundary in the complement of $K_+$.

Observe that since $\Sigma_+$ is constructed from $A\cup\Sigma_-$ by resolving their intersections, $K_-''$ is a push-off of $K_+$ into $\Sigma_+$. Relative to the product framing, it links with $K_+$ in the same way as $K_-'$ linked with $K_-$ except for the $J\cdot A$ which is an additional twist and contributes $\pm 1$.

So with respect to the product framing, $lk(K_-',K_-)-lk(K_-'',K_+)=J\cdot A$.
\end{proof}

\begin{lemma}\label{twistingnearalpha} 
In the construction of Lemma \ref{construction}, $$tw_{J}(Fr_{\Sigma_+},Fr_{\Sigma_-})=J\cdot A.$$
\end{lemma}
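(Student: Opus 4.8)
The plan is to track how the Seifert framing on $J$ changes when we replace $\Sigma_-$ by $\Sigma_+$, keeping in mind that the only modification near $J$ happens across the singular clasp arc $\alpha$. Recall from the construction that $J\cap A=\{p\}$, and $p$ is the endpoint of the unique singular clasp $\alpha$ that lies on $J$; away from a neighborhood of $\alpha$, the surfaces $\Sigma_+$ and $\Sigma_-$ agree along $J$ (all other resolutions—trivial ribbon arcs, non-ribbon arcs parallel to $K_-$, circle intersections, and the clasps $\beta_i$—were shown to modify only the interior of $\Sigma_-$, or to take place in $N(K_-)$ disjoint from $J$). So the difference in framings is a local computation in a neighborhood of $p\in J$.

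First I would fix a push-off $J'$ of $J$ into $\Sigma_-$ realizing $Fr_{\Sigma_-}$, so that $tw_J(Fr_{\Sigma_-},Fr_{N(J)})=lk(J',J)$ for a fixed reference product framing $Fr_{N(J)}$ on a tubular neighborhood $N(J)$. Since $\Sigma_+$ is obtained from $\Sigma_-$ by resolving the singular clasp at $\alpha$ (the Umschaltung/orientation-preserving cut of Figures \ref{f6_0}, \ref{f6}, \ref{f7}, \ref{f8}), and this resolution interpolates between $\Sigma_-$ and $\Sigma_+$ only in a ball around $p$, I can take a push-off $J''$ of $J$ into $\Sigma_+$ that coincides with $J'$ outside this ball. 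The quantity $tw_J(Fr_{\Sigma_+},Fr_{\Sigma_-})$ is then exactly $lk(J'',J)-lk(J',J)$, computed entirely inside that ball, i.e. it is the local linking/twisting contribution of the clasp resolution at $\alpha$. The key geometric fact is that resolving the singular clasp at its singular endpoint $p$ on $J$ forces the pushed-off strand to pass through the sheet $A$ once, because $A$ meets $J$ transversely in the single point $p$ with sign $J\cdot A=\pm1$; this is the same $\pm1$ bookkeeping that appears near each $\beta_i$ in Lemma \ref{twistingnearbeta}, but now there is exactly one such point (the singular clasp $\alpha$ is unique, as the proof of Lemma \ref{construction} notes). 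Orienting everything consistently with $[\partial\Sigma]=[K]-[J]$, this contribution is precisely $J\cdot A$.

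The cleanest way to organize this is to compare with Lemma \ref{twistingnearbeta} directly: there, resolving the clasps $\beta_i$ together with the singular clasp $\alpha$ near $K_-$ accounted for a total of $J\cdot A$ worth of extra twisting along $K_+$ (the $\beta_i$ contributing nothing relative to the product framing, $\alpha$ contributing the lone $\pm1$). By symmetry of the clasp resolution—the Umschaltung is symmetric in the two sheets $\Sigma_-$ and $A$—the same singular point $p$ on $J$ records an equal and opposite, hence equal in magnitude, twisting along $J$; tracing the orientations (with $-J$ appearing as a boundary component) shows the sign works out to $+J\cdot A$ as stated. Concretely I would: (1) localize to a ball $B$ around $p$ so that $\Sigma_\pm\cap(M\setminus B)$ agree near $J$; (2) write $tw_J(Fr_{\Sigma_+},Fr_{\Sigma_-})$ as a signed count of how many times a push-off of $J$ threads through $A$ when passed from $\Sigma_-$ to $\Sigma_+$ inside $B$; (3) observe this count is $\pm1$ and equals $J\cdot A$ by the transversality $A\pitchfork J=\{p\}$ and the chosen orientations.

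The main obstacle I expect is the sign: pinning down that the local model gives exactly $+J\cdot A$ rather than $-J\cdot A$ requires carefully matching the orientation conventions used in the Umschaltung (Figures \ref{f6_0}–\ref{f8})—in particular which pairs of cut edges are ``adjacent'' in the sense defined at the end of the proof of Lemma \ref{construction}—against the convention $[\partial\Sigma]=[K]-[J]$ and the orientation of $A$ with $\partial A=-K_-\cup K_+$. I would resolve this by drawing the standard local picture of the singular clasp resolution near $p$, with $J$ as one boundary strand and $A$ the transverse sheet, and reading off the linking of the new push-off $J''$ against $J$ relative to the old one $J'$; this is the genuinely content-bearing step, while everything else is the same localization-and-product-framing bookkeeping already performed in Lemma \ref{twistingnearbeta}. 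A useful consistency check, which I would include, is that combining this lemma with Lemma \ref{twistingnearbeta} and Lemma \ref{independent} should make the total change in $\widetilde{tb}$ along $K$ and $J$ cancel, confirming the $J\cdot A$ terms are correctly signed.
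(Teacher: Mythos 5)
Your proposal is correct and follows essentially the same route as the paper: both arguments observe that only the resolution of the singular clasp $\alpha$ affects the framing along $J$, localize to a neighborhood of the single transverse intersection point $p=J\pitchfork A$, and measure $tw_J(Fr_{\Sigma_+},Fr_{\Sigma_-})$ by comparing push-offs of $J$ into $\Sigma_-$ and $\Sigma_+$ that agree away from $p$, reducing the count to a single signed intersection equal to $J\cdot A$ (the paper records this via an arc $\delta\subset A$ radial in $N(J)$, which is the same bookkeeping as your ``threading through the sheet $A$'').
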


\begin{proof}
Resolving the clasps $\beta_i$ changes the interior of $\Sigma_-$ away from $J$ so it does not affect the framing at $J$. We need to see what changes occur to the framing $Fr_{\Sigma_-}$ along $J$ when we resolve the singular clasp $\alpha$. Prior to resolving it, take (the closure of) a standard framed tubular neighborhood $N(J)\cong S^1\times D^2$ with framing $Fr_{\Sigma_-}$ via a push-off $J_-=S^1\times \{\frac{1}{2},0\}$ into $\Sigma_-$. Consider an arc $\delta\subset A$ that ends at $p$ such that $\delta\cap N(J)=\{0\}\times [0,1]$. After $\alpha$ is resolved, $J\subset \partial \Sigma_+$, take a push-off $J'$ of $J$ into $\Sigma_+$. Since $\Sigma_-$ and $\Sigma_+$ are isotopic away from a neighborhood of the resolved arc $\alpha$, we can choose that $J'$ and $J_-$ coincide away from $p$. Observe that $J'$ intersects $\delta$, moreover, $J'\cdot \delta=J\cdot A$ by the construction of $\Sigma_+$. The winding number of the normal to $J$ defined by $J_-$ under the diffeomorphism to $N(J)$ equals 0, and with the winding number of the normal to $J$ defined by $J'$, we get $J'\cdot \delta=J\cdot A$. This gives $tw_{J}(Fr_{\Sigma_+},Fr_{\Sigma_-})$.
\end{proof}

\begin{remark}\label{alphatwist}
Let $\Sigma_+$ be the surface constructed in Lemma \ref{construction}. Then $\Sigma_+$ is smoothly isotopic to a surface obtained from $\Sigma_-$ by the following local operation in a neighborhood $D_\alpha$ of the singular clasp $\alpha$ (Figure \ref{f9}). Cut $D_\alpha$ along its boundary arcs that run across $K_-$ to $J$ in the interior of $\Sigma_-$. Then create an oriented twist in $D_\alpha$  depending on the sign of $J\cdot A=\{p\}$, as shown Figure \ref{f9}, and re-glue back by the identity diffeomorphism. This gives the diffeomorphism type of $\Sigma_+$ as constructed in Lemma \ref{construction}.

\begin{figure}[!ht]
\includegraphics[scale=.4]{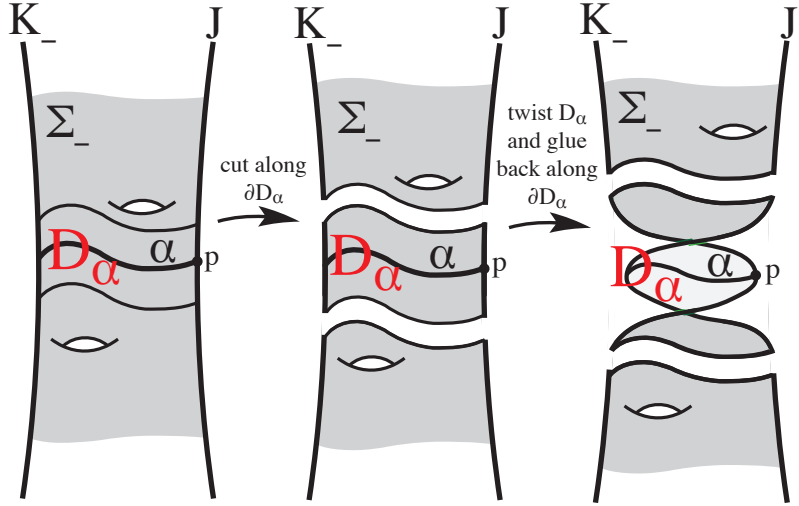}
\caption{Obtaining the diffeomorphism type of $\Sigma_+$ by a local twist in $\Sigma_-$ around the singular clasp $\alpha$. Since only $p\in\partial\alpha$ is fixed, the diffeomorphism type of $\Sigma_+$ depends on the arc $\alpha$. The twist is determined by the sign of the intersection of $J$ and $A$ at $p$.}\label{f9}
\end{figure}

To see this, note that resolving the clasp intersections $\beta_i$ creates a surface which is smoothly isotopic to $\Sigma_-\cup A$ since away from the singular clasp $\alpha$, $\Sigma_-\cup A$ deformation retracts to $\Sigma_-$. So the diffeomorphism type of $\Sigma_+$ is determined only by the resolution of $\alpha$. Now, since $K_-$ and $K_+$ are oriented as boundary components of the oriented annulus $A$, make sure that the orientations on $K_-$ as a boundary component of $\Sigma_-$ and a boundary component of $A$ coincide, consider $J\cdot A=\pm 1$. If $J\cdot A=+1$, then in the construction of Lemma \ref{construction}, resolving $\alpha$ produces an oriented surface which locally is obtained from $\Sigma_-$ by a left twist in the disc $D_\alpha$. Similarly, if $J\cdot A=-1$, the twist on $D_\alpha$ is right. Thus the diffeomorphism type of $\Sigma_+$ depends on the arc $\alpha$ and the sign of $J\cdot A$. In particular, if $\alpha$ and $\alpha'$ are two isotopic arcs (with common boundary point $p\in J$) in $\Sigma_-$ running from $K$ to $p\in J$, then the resulting surfaces $\Sigma_+$ and $\Sigma_+'$ obtained by resolving $\alpha$ and $\alpha'$ are isotopic.
\end{remark}

\section{Legendrian and Transverse Isotopy Invariance}

Consider homologous Legendrian (resp. transverse) knots $K$ and $J$ oriented as $K\cup J=\partial\Sigma$ for an oriented embedded surface $\Sigma$ in a contact 3-manifold $(M,\xi)$ so that $[\partial\Sigma]=[K]-[J]$ and a Legendrian (resp. transverse) isotopy $\varphi_t:S^1\times[0,1]\rightarrow(M,\xi)$ of $K$ in the complement of $J$, which fixes $J$. We can extend it to all of $M$ by the Isotopy Extension Theorem (in the Legendrian or transverse category, resp., see \cite{traynor}) with $\varphi_t=id$ on a neighborhood of $J$. Because $\varphi_t$ extends to an isotopy of the Seifert surface $\Sigma$, the relative invariants are preserved. That is, for a Legendrian isotopy $\varphi_t$, $\widetilde{tb}_\Sigma(\varphi_t(K),\varphi_t(J))=\widetilde{tb}_\Sigma(K,J)$ and $\widetilde{r}_\Sigma(\varphi_t(K),\varphi_t(J))=\widetilde{r}_\Sigma(K,J)$, and for a transverse isotopy $\varphi_t$, $\widetilde{sl}_\Sigma(\varphi_t(K),\varphi_t(J))=\widetilde{sl}_\Sigma(K,J)$ for all $t$.

Now consider a Legendrian (resp. transverse) isotopy which, as in Lemma \ref{construction}, isotops $K$ across $J$, that is, $\varphi_s(K)$ crosses $J$ for some $s\in (0,1)$. Then $\varphi_t$ no longer extends to an isotopy of $\Sigma$. For $\epsilon>0$, let $K_\pm$ and $\Sigma_-$ be as in Lemma \ref{construction} and assume the trace of $\varphi_t$ and transverse to $J$ at $p=\varphi_s(K)\pitchfork J$,  we will say that $\varphi_t$ is {\em locally transverse} to $J$ (see \cite{georgi} for a discussion of the case when $\varphi_t$ is not locally transverse to $J$). We claim that in this case the relative invariants are well-defined for all $t\neq s$ and are indeed invariant under $\varphi_t$.

\begin{theorem}\label{legendriantransverseinvariance} 
Consider homologous Legendrian (resp. transverse) knots $K$ and $J$ oriented as boundary components of an oriented embedded surface $\Sigma$ in a contact 3-manifold $(M,\xi)$ so that $[\partial\Sigma]=[K]-[J]$. Consider a Legendrian (resp. transverse) isotopy $\varphi_t:S^1\times[0,1]\longrightarrow(M,\xi)$ of $K$, and suppose that $\varphi_s(K)\pitchfork J$ and $\varphi_t(K)\cap J=\emptyset$ for $t\neq s$. 
\begin{enumerate}[(a)]
\item If $\varphi_t$ is Legendrian, then $\widetilde{tb}_{\Sigma'} (\varphi_t(K), J)=\widetilde{tb}_\Sigma(K,J)$ for all $t\neq s$, where $[\Sigma']\in H_2(M,\varphi_t(K)\cup J;\Z)$.
\item If $\varphi_t$ is Legendrian, then $\widetilde{r}_{\Sigma'} (\varphi_t(K), J)=\widetilde{r}_\Sigma(K,J)$ for all $t\neq s$, where $[\Sigma']\in H_2(M,\varphi_t(K)\cup J;\Z)$.
\item If $\varphi_t$ is transverse, then $\widetilde{sl}_{\Sigma'} (\varphi_t(K), J)=\widetilde{sl}_\Sigma(K,J)$ for all $t\neq s$, where $[\Sigma']\in H_2(M,\varphi_t(K)\cup J;\Z)$.
\end{enumerate}
\end{theorem}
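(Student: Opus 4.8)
The plan is to reduce the global statement to the purely local computation packaged in Lemmas~\ref{twistingnearbeta} and \ref{twistingnearalpha}, combined with the surface-independence results of Section~4. Fix the moment $s$ at which $\varphi_s(K)$ crosses $J$ transversely at the single point $p$, pick a small $\epsilon>0$, and set $K_\pm=\varphi_{s\pm\epsilon}(K)$, $\Sigma_-=\varphi_{s-\epsilon}(\Sigma)$, and let $\Sigma_+$ be the surface produced by Lemma~\ref{construction} with $\partial\Sigma_+=K_+\cup J$. For any $t<s$ the isotopy lives in $M\setminus J$, so it extends (in the Legendrian, resp. transverse, category, via the Isotopy Extension Theorem cited in the excerpt) to an ambient isotopy carrying $\Sigma$ to $\Sigma_-$; hence all three relative invariants at parameter $t<s$ equal their values at $t=0$, and symmetrically the values at $t>s$ all agree with the value at $\Sigma_+$. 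Thus it suffices to prove the three equalities across the single crossing: $\widetilde{tb}_{\Sigma_+}(K_+,J)=\widetilde{tb}_{\Sigma_-}(K_-,J)$, and similarly for $\widetilde r$ and $\widetilde{sl}$. Note also that since $K_-$ and $K_+$ are Legendrian (resp. transverse) isotopic through a short isotopy supported near $p$, their individual classical invariants along the knots are unchanged; the only thing that can move is the Seifert framing coming from the surface, and that is exactly what the twisting lemmas control.

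For part (a): write $\widetilde{tb}_{\Sigma_\pm}(K_\pm,J)=tw_{K_\pm}(\xi,Fr_{\Sigma_\pm})-tw_J(\xi,Fr_{\Sigma_\pm})$. Using the standard framed neighborhood $N(K_-)$ with the product framing $Fr_{N(K_-)}$ from Lemma~\ref{twistingnearbeta}, decompose $tw_{K_-}(\xi,Fr_{\Sigma_-})=tw_{K_-}(\xi,Fr_{N(K_-)})+tw_{K_-}(Fr_{N(K_-)},Fr_{\Sigma_-})$ and likewise for $K_+$. Since $K_+$ is the parallel translate of $K_-$ in $N(K_-)$ and the contact structure is unchanged there, $tw_{K_+}(\xi,Fr_{N(K_-)})=tw_{K_-}(\xi,Fr_{N(K_-)})$. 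Lemma~\ref{twistingnearbeta} gives $tw_{K_-}(Fr_{\Sigma_-},Fr_{N(K_-)})-tw_{K_+}(Fr_{\Sigma_+},Fr_{N(K_-)})=J\cdot A$, so the $K$-contributions to $\widetilde{tb}$ differ by exactly $J\cdot A$. On the $J$ side, Lemma~\ref{twistingnearalpha} gives $tw_J(Fr_{\Sigma_+},Fr_{\Sigma_-})=J\cdot A$, so $tw_J(\xi,Fr_{\Sigma_+})-tw_J(\xi,Fr_{\Sigma_-})=J\cdot A$ as well. The two $J\cdot A$ contributions enter $\widetilde{tb}_{\Sigma_+}-\widetilde{tb}_{\Sigma_-}$ with opposite signs (one from the $K$-term, one subtracted via the $J$-term) and cancel, giving $\widetilde{tb}_{\Sigma_+}(K_+,J)=\widetilde{tb}_{\Sigma_-}(K_-,J)$. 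Finally, invoke Lemma~\ref{independent} to conclude the common value is independent of which relative homology class $[\Sigma']\in H_2(M,\varphi_t(K)\cup J)$ one uses, which is what the statement asks.

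For parts (b) and (c): the strategy is the same book-keeping but now with winding numbers of the distinguished vector field ($v_K$ for rotation, the push-off direction $v$ in $\xi$ for self-linking) in place of framings. By Remark~\ref{alphatwist}, $\Sigma_+$ is obtained from $\Sigma_-$ by a local oriented twist in a neighborhood $D_\alpha$ of the singular clasp $\alpha$, a twist whose sign is $J\cdot A$. This local twist changes the trivialization of $\xi$ restricted to the surface near $\alpha$ by a full $\pm2\pi$ rotation, and the arc $\alpha$ runs from $K$ to $J$, so the winding number contribution along $K$ and the winding number contribution along $J$ each change by $J\cdot A$ (with the same sign, since the twist region abuts both boundary components through $\alpha$). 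Hence in $\widetilde r_{\Sigma_\pm}(K_\pm,J)=w_\sigma(v_{K_\pm})-w_\sigma(v_J)$ the two changes cancel, and likewise for $\widetilde{sl}$, using Lemma~\ref{trivializationdependence} (resp. Lemma~\ref{independent3}) to see the answer does not depend on the chosen trivialization and the relative-homology independence statement of Lemma~\ref{independent2} (resp. its self-linking analogue) for the final claim. The main obstacle is the bookkeeping of signs and orientations in (b) and (c): one must check carefully that the twist localized at $\alpha$ affects the winding numbers along $K$ and along $J$ \emph{with the same sign}, so that the difference defining the relative invariant is insensitive to it — this is exactly the ``opposite twists at opposite endpoints of the arc'' phenomenon already used in Lemma~\ref{independent} and Figure~\ref{f11}, transported to the vector-field setting. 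Once that sign match is nailed down, (a)–(c) follow uniformly.
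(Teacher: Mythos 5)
Your reduction to a single crossing and your proof of part (a) follow the paper's argument essentially verbatim: the same decomposition of $tw_{K_\pm}(\xi,Fr_{\Sigma_\pm})$ through the product framing $Fr_{N(K_-)}$, the same use of Lemmas \ref{twistingnearbeta} and \ref{twistingnearalpha}, and the same cancellation of the two $J\cdot A$ contributions. That part is fine.

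For parts (b) and (c) there is a genuine gap. Your entire argument rests on the claim that the local twist at the singular clasp $\alpha$ changes the winding numbers along $K$ and along $J$ by $J\cdot A$ \emph{with the same sign}, and you justify this only by asserting that ``the twist region abuts both boundary components through $\alpha$'' and by analogy with Figure \ref{f11}. But that analogy cuts the other way: in Lemma \ref{independent} the resolution of a ribbon arc running from $K$ to $J$ produces twists of \emph{opposite} signs at the two endpoints (that is exactly what Figure \ref{f11} shows), and the conclusion $tw_K=tw_J$ there comes out of a separate orientation bookkeeping, not from the naive ``same region, same sign'' heuristic. If the two winding-number contributions at $\alpha$ had opposite signs, the relative rotation number would jump by $2\,J\cdot A$ rather than $0$, so this sign match is the entire content of the theorem in cases (b) and (c); it cannot be deferred. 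The paper's proof spends most of its length precisely on this point, building a local model in $(B^3,\xi_{std})$ (Remark \ref{rotnumber}), introducing the auxiliary discs $D_A$, $D_\pm$, $D_1$, $D_2$ and the annulus $\Lambda$, and verifying by an explicit orientation check that $J\cdot D_A = K_2\cdot D_1$, which is what makes the two contributions equal with the same sign. In addition, you do not address the ordinary clasps $\beta_i$ for (b) and (c) at all: Remark \ref{alphatwist} only controls the \emph{diffeomorphism type} of $\Sigma_+$, whereas the rotation and self-linking numbers depend on the trivialization of $\xi$ over the embedded surface near $K_\pm$, so one must separately show (as the paper does with the local annuli $\Lambda_A$, $\Lambda_-$) that resolving each $\beta_i$ contributes nothing to $w_{\sigma_{\Sigma_+}}(v_{K_+})-w_{\sigma_{\Sigma_-}}(v_{K_-})$. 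Without these two verifications your proof of (b) and (c) is a statement of the desired cancellation rather than a proof of it.
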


\begin{proof}[Proof of Theorem \ref{legendriantransverseinvariance}(a)] 
We will use the construction of $\Sigma_+$ in the case of smooth isotopies (Lemma \ref{construction}). Resolving some intersections of $\Sigma_-\cup A$ involved isotoping the interior of $\Sigma_-$ locally, which does not affect the relative invariants. Resolving the clasp intersections, however, may. We assume that we have resolved all intersections besides the (possibly multiple) clasps $\beta_i$ and the singular clasp $\alpha$.

Assume first that $\varphi_t$ is Legendrian and let $\Sigma_+$ be constructed as in the proof of Lemma \ref{construction}. First, we prove the invariance of the relative Thurston-Bennequin number.

Prior to resolving any of the clasp intersections between $\Sigma_-$ and $A$, take a framed Legendrian neighborhood $N(K_-)$ of $K_-$ such that the product framing $Fr_{N(K_-)}$ is given by $Fr_A$ as in Lemma \ref{twistingnearbeta}, and so $K_+$ is a parallel copy of the Legendrian core $K_-$.

Since $K_-$ and $K_+$ are Legendrian isotopic, $tw_{K_-}(\xi,Fr_A)=tw_{K_+}(\xi,Fr_A)$, equivalently, $\widetilde{tb}_A(K_-,K_+)=0$. The reason for this is the fact that there exists an immersed annulus $A$ (the trace of the isotopy) that satisfies this, so by Lemma \ref{independent}, the embedded surface $A$ satisfies this condition as well. Equivalently, since we are in a framed Legendrian neighborhood $N(K_-)$, $tw_{K_-}(\xi,Fr_{N(K_-)})=tw_{K_+}(\xi,Fr_{N(K_-)})$ or $\widetilde{tb}_{N(K_-)}(K_-,K_+)=0$. By Lemma \ref{independent}, we could use $\Sigma_+$ to prove the invariance of the relative Thurston-Bennequin number. By the discussion at the beginning of this section, it is sufficient to prove that
$$\widetilde{tb}_{\Sigma_-}(K_-,J)=\widetilde{tb}_{\Sigma_+}(K_+,J),$$
which, by definition, is equivalent to
$$tw_{K_-}(\xi,Fr_{\Sigma_-})-tw_J(\xi,Fr_{\Sigma_-})=tw_{K_+}(\xi,Fr_{\Sigma_+})-tw_J(\xi_,Fr_{\Sigma_+})$$ 
or
\begin{equation}\label{eq1}
tw_{K_+}(\xi,Fr_{\Sigma_+})-tw_{K_-}(\xi,Fr_{\Sigma_-})=tw_J(\xi,Fr_{\Sigma_+})-tw_J(\xi,Fr_{\Sigma_-}).
\end{equation}

Since $tw_{K_\pm}(\xi,Fr_{\Sigma_\pm})=tw_{K_\pm}(\xi,Fr_{N(K_-})+tw_{K_\pm}(Fr_{N(K_-},Fr_{\Sigma_\pm})$, the left-hand side of equation (\ref{eq1}) equals
$$tw_{K_+}(\xi,Fr_{N(K_-)})+tw_{K_+}(Fr_{N(K_-)},Fr_{\Sigma_+})-tw_{K_-}(\xi,Fr_{N(K_-)})-tw_{K_-}(Fr_{N(K_-)},Fr_{\Sigma_-}).$$

Since $K_+$ is Legendrian isotopic to $K_-$, $tw_{K_-}(\xi,Fr_{N(K_-)})=tw_{K_+}(\xi,Fr_{N(K_-)})$, so the above expression simplifies to
$$tw_{K_+}(Fr_{N(K_-)},Fr_{\Sigma_+})-tw_{K_-}(Fr_{N(K_-)},Fr_{\Sigma_-}).$$
Since $tw_{K_-}(Fr_{N(K_-)},Fr_{\Sigma_-})=tw_{K_+}(Fr_{N(K_-)},Fr_{\Sigma_+})+J\cdot A$ by Lemma \ref{twistingnearbeta}, the expression above (and the left-hand side of equation (\ref{eq1})) equals $-J\cdot A$.

Now, the right-hand side of equation (\ref{eq1}) equals 
$$tw_J(\xi,Fr_{\Sigma_-})+tw_J(Fr_{\Sigma_-},Fr_{\Sigma_+})-tw_J(\xi,Fr_{\Sigma_-}),$$ 
which simplifies to $tw_J(Fr_{\Sigma_-},Fr_{\Sigma_+})$. By Lemma \ref{twistingnearalpha}, $tw_J(Fr_{\Sigma_-},Fr_{\Sigma_+})=J\cdot A$, which implies that $tw_J(Fr_{\Sigma_-},Fr_{\Sigma_+})=-J\cdot A$, and equation (\ref{eq1}) holds.
\end{proof}

\begin{proof}[Proof of Theorem \ref{legendriantransverseinvariance}(b)] 

We set some general notation. For a contact 3-manifold an a Legendrian knot bounding an oriented surface and oriented as the boundary of the surface in the 3-manifold, let $w_{\sigma_{surface}}(v_{knot})$ denote the winding number of the positive tangent vector field to the knot measured as a winding number of a loop of vectors under the identification with $\R^2$ at each point of the knot given by the trivialization $\sigma_{surface}$ of the contact structure over the surface restricted to the boundary. In the standard sense, $w_{\sigma_{surface}}(v_{knot})$ is the rotation number of the Legendrian knot. 

We start with the following simple observation.
\begin{remark}\label{rotnumber}
Consider a Legendrian unknot $K_1\subset(B^3,\xi_{std}\rvert_{B^3})$ in standard contact 3-ball around the origin in $(\R^3,\xi_{std}=\ker(dz-ydx))$.  The rotation number of $K_1$ (with respect to the global trivialization on $(B^3,\xi_{std}\rvert_{B^3})$ given by the non-zero section $\partial/\partial y$ of $\xi_{std}\rvert_{B^3}$) is given by the degree of the tangent vector to the Lagrangian projection $\pi(K_1)$, where $\pi:\R^3\rightarrow\R^3:(x,y,z)\mapsto (x,y)$ (see \cite{etnyre:knots}). Note that since $K_1$ bounds a 2-disc in $B^3$, we can consider its oriented projection under $\pi$ and compute the winding number of its oriented boundary $\pi (K_1)$, in the Lagrangian projection, the positively oriented parts of the 2-disc point in the positive $z$-direction of the $xy$-plane, so the rotation number computation amounts to counting the number of positively oriented (upward) circles in the projection of the 2-disc. Now take another oriented Legendrian knot $K_2$ which co-bounds an annulus $\Lambda$ with $K_1$, so its Lagrangian projection links with $K_1$ (positively if $K_2\cdot D=1$ and negatively if $K_2\cdot D=-1$). Orient $\Lambda$ so that the orientation that $K_1$ inherits as its boundary is the opposite to the one it inherits as boundary of $D$. Then $\xi_{std}$ is trivial over $\Lambda$, and $r_\Lambda(K_1)=w_{\sigma_{_\Lambda}} (v_{K_1})$ . Let $\sigma_{std}$ denote the trivialization from $\partial/\partial y$ on $\xi_{std}$ restricted to $K_1$, then
$$w_{\sigma_{std}}(v_{K_1})=w_{\sigma_{_{D}}}(v_{K_1})=w_{\sigma_{_\Lambda}}(v_{K_1})-K_2\cdot D.$$
This explicitly tells us how the rotation number of $K_1$ is computed for another trivialization coming from another Seifert surface, purely in terms of a linking number with a knot which defines the new Seifert framing for $K_1$.
This observation is easy to see by working with Legendrian knots ``with corners" in $(\R^3,\xi_{std})$, note that it holds in this case as well. Another approach is to use the characteristic foliations on the 2-dsic $D$ and the annulus $\Lambda$.
\end{remark}

In our situation, we have the trivializations $\sigma_{_{\Sigma_\pm}}:\xi\rvert_{\Sigma_\pm}\rightarrow\Sigma_\pm\times\R^2$ induced by each Seifert surface on $\sigma_{_{\Sigma_\pm}}$ and their restrictions to the knots $K_\pm$ and $J$. We need to show
$$\widetilde{r}_{\Sigma_-}(K_-,J)=\widetilde{r}_{\Sigma_+}(K_+,J),$$
which, by definition, is equal to
$$w_{\sigma_{_{\Sigma_-}}}(v_{K_-})-w_{\sigma_{_{\Sigma_-}}}(v_J)=w_{\sigma_{_{\Sigma_+}}}(v_{K_+})-w_{\sigma_{_{\Sigma_+}}}(v_J),$$
or, rearranging terms, 
$$w_{\sigma_{_{\Sigma_-}}}(v_{K_-})-w_{\sigma_{_{\Sigma_+}}}(v_{K_+})=w_{\sigma_{_{\Sigma_-}}}(v_J)-w_{\sigma_{_{\Sigma_+}}}(v_J).$$

We will use a local model near the intersections $\alpha$ and $\beta_i$. This is justified by the fact that away from intersection points along $K_-$, $K_+$, and $J$, we can assume that the trivializations $\sigma_{_{\Sigma_-}}$ and $\sigma_{_{\Sigma_+}}$ coincide along $\partial\Sigma_\pm$, with the special point that along $K_-$, we would need to change the sign since $K_-$ acquires opposite orientation from the surface $\Sigma_-$ and the surface $A$. Therefore, we only need to see what the local contributions to the winding numbers are near the arcs of intersection. 

First we deal with the arcs $\beta_i$, refer to Figures \ref{f7} and \ref{f_beta}.

Take an arc $\beta_i$ and consider a disc with corners $D_A\subset A$ around $\beta_i$ and boundary components $\gamma_1, \gamma_2, \eta_-, \eta_+$ with $\eta_\pm\subset K_\pm$. Isotop the $\gamma_i$ to make them Legendrian and chose the corners of $D_A$ so that the winding numbers $w_{\sigma_{_{D_A}}}(v_{\eta_\pm})$ and $w_{\sigma_{_{D_A}}}(v_{\gamma_i})$ are well-defined and $w_{\sigma_{_{D_A}}}(v_{\eta_-})=w_{\sigma_{_{D_A}}}(v_{\eta_+})$ and $w_{\sigma_{_{D_A}}}(v_{\gamma_1})=w_{\sigma_{_{D_A}}}(v_{\gamma_2})$. Now take a small 2-disc with corners $D_-\subset\Sigma_-$ around $\beta_i$ with Legendrian boundary $\delta_1\cup\zeta\cup\delta_2\cup\eta_-$, such that $w_{\sigma_{_{D_-}}}(v_{\zeta})=-w_{\sigma_{_{D_-}}}(v_{\eta_-})$ and $w_{\sigma_{_{D_-}}}(v_{\delta_1})=w_{\sigma_{_{D_-}}}(v_{\delta_2})$. Note that resolving the arc $\beta_i$ in constructing the surface $\Sigma_+$ is a local operation and we can arrange the two discs $D_A$ and $D_-$ to be contained in an arbitrarily small 3-ball neighborhood of the arc $\beta_i$. Let $S_+$ with $\partial S_+=\eta_+\cup\gamma_1\cup\gamma_2\cup\zeta$ be the new surface in our 3-ball neighborhood after the arc $\beta_i$ has been resolved. Note that $S_+$ coincides with $D_A\cup D_-$ in a neighborhood of $\gamma_1\cup\gamma_2\cup\delta_1\cup\delta_2$.

Take a contactomorphism $f$ between the 3-ball neighborhood containing $S_+$ and $(B^3,\xi_{std}\rvert_{B^3})$. Now, take 2-disc strips to complete $D_A$ to an annulus $\Lambda_A$ with Legendrian $\partial\Lambda_A=(\eta_+\cup z_+)\cup (\eta_-\cup z_-)$ and complete $D_-$ to an annulus $\Lambda_-$ with Legendrian $\partial\Lambda_-=(\eta_-\cup z_-)\cup (\zeta\cup\zeta_-)$. Arrange that $w_{\sigma_{_{\Lambda_A}}}(v_{z_+})=-w_{\sigma_{_{\Lambda_A}}}(v_{z_-})$ and $w_{\sigma_{_{\Lambda_-}}}(v_{z_-})=-w_{\sigma_{_{\Lambda_-}}}(v_{\zeta_-})$. Note also that $\eta_+\cup z_+$ bounds a 2-disc $D_1$ and that $\Lambda\Lambda_A\cup\Lambda_-\setminus (D_A\cup D_-)\cup S_+$ is an annulus which coincides with $\Lambda_A\cup\Lambda_-$ near 
$\gamma_1\cup\gamma_2\cup\delta_1\cup\delta_2$.

Now take an oriented push-off $z'$ of $\eta_-\cup z_-$ into $\Lambda_-$ and note that $f(z')\cdot f(\Lambda_A\cup D_1)=f(z')\cdot f(D_A\cup D_1)=\zeta\cdot D_1$, because $z'$ is isotopic to $\zeta\cup\zeta_-$. Also take an oriented push-off $z''$ of $\eta_+\cup z_+$ into $\Lambda$ and note that $f(z'')\cdot f(D_1)=\zeta\cdot D_1$.

\begin{figure}[!ht]
\centering
\includegraphics[scale=.4]{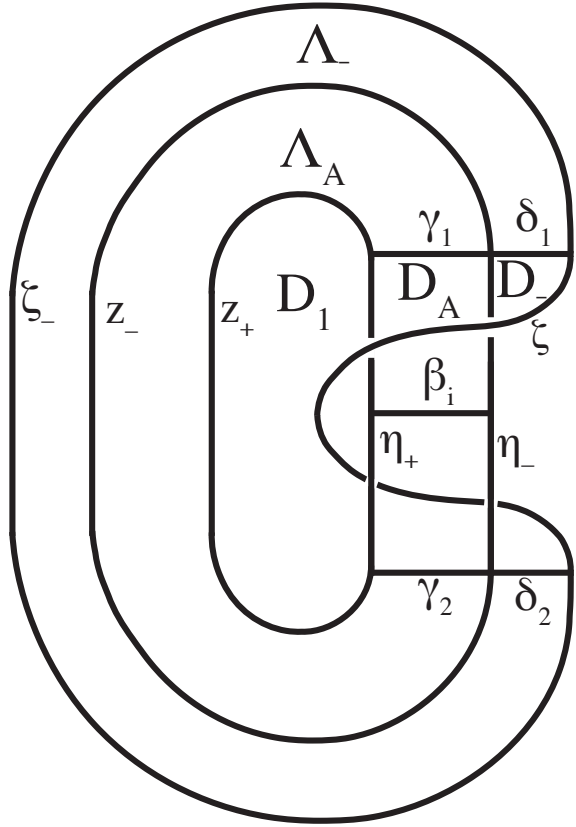}
\caption{Local picture near $\beta_i$.}\label{f_beta}
\end{figure} 

By Remark \ref{rotnumber}, 
$$w_{\sigma_{_{\Lambda_A}}}(v_{\eta_-\cup z_-})-w_{\sigma_{_{\Lambda_-}}}(v_{\eta_-\cup z_-})=-\zeta\cdot D_1$$ 
and since $w_{\sigma_{_{D_1}}}(v_{\eta_+\cup z_+})=-w_{\sigma_{_{\Lambda_A}}}(v_{\eta_+\cup z_+})$,
$$-w_{\sigma_{_{\Lambda_A}}}(v_{\eta_+\cup z_+})-w_{\sigma_{_{\Lambda}}}(v_{\eta_+\cup z_+})=-\zeta\cdot D_1.$$

Subtracting these and noting that $w_{\sigma_{_{\Lambda_A}}}(v_{\eta_+\cup z_+})=-w_{\sigma_{_{\Lambda_A}}}(v_{\eta_-\cup z_-})$, we have
$$w_{\sigma_{_{\Lambda}}}(v_{\eta_+\cup z_+})-w_{\sigma_{_{\Lambda_-}}}(v_{\eta_-\cup z_-})=0,$$
and subtracting the equal contributions of rotations along $z_-$ and $z_+$, we have
$$w_{\sigma_{_{S_+}}}(v_{\eta_+})-w_{\sigma_{_{D_-}}}(v_{\eta_-})=0,$$
which implies that as a result of the local resolution of the arc $\beta_i$, there is no contribution to the winding number of $K_+$ as compared to the winding number of $K_-$ with respect to $\Sigma_-$. Thus the winding numbers of $v_{K_-}$ and $v_{K_+}$, are fixed, regardless of which trivialization ($\sigma_{_{\Sigma_-}}$ or $\sigma_{_{\Sigma_+}}$) is used. 

Since resolving the $\beta_i$ arcs is done in the complement of $J$, we have no contribution to $w_{\sigma_{_{\Sigma_-}}}(v_J)$ and $w_{\sigma_{_{\Sigma_+}}}(v_J)$ from this, so there is no contribution to the value of the relative rotation number of $K_+$ relative to $J$ as a result of resolving the $\beta_i$.

Now we resolve the arc $\alpha$, we will clear the notation we used in resolving $\beta_i$, but stick to the notation rules we have set. We expect that it would change the individual rotation numbers of $K_+$ and $J$, but these changes would subtract out in the value of the relative rotation number. 

For the following, refer to Figure \ref{f7} and Figure \ref{f_alpha1} to understand the local picture near the arc $\alpha$. We know that $p\in\partial\alpha$ lies on $J$, let the other point in $\partial\alpha$ be $q\in K_-$. Take a half-disc $D_A\subset A$ with Legendrian boundary around $\alpha$ with corners at points $\{a,b\}\subset K_-$ and label $\partial D_A=\gamma\cup\gamma'$, where $\gamma\subset K_-$ contains the point $q$ with $\partial\gamma=\{a,b\}$ and $\alpha\setminus\{q\}$ is contained in the interior of $D_A$. 

On $\Sigma_-$, choose an arc $\eta_1$ with $\partial\eta_1=\{a,r_1\}$, where $r_1\in J$ and let the arc in $J$ bounded by $p$ and $r_1$ be $\delta_1$ so that $\gamma\cup\eta_1\cup\delta_1\cup\alpha$ bound a 2-disc $D_1\subset\Sigma_-$. Note that $D_1$ has four corners $a$, $r_1$, $p$, and $q$. Similarly, choose an arc $\eta_2$ with $\partial\eta_2=\{b,r_2\}$, where $r_2\in J$ and let the arc in $J$ bounded by $p$ and $r_2$ be $\delta_2$ so that $\gamma\cup\eta_2\cup\delta_2\cup\alpha$ bound a 2-disc $D_2\subset\Sigma_-$ with four corners $b$, $r_2$, $p$, and $q$, such that $D_1\cap D_2=\alpha$. Let $\delta=\delta_1\cup\delta_2$ and $D_-=D_1\cup D_2$. 

\begin{figure}[!ht]
\includegraphics[scale=.37]{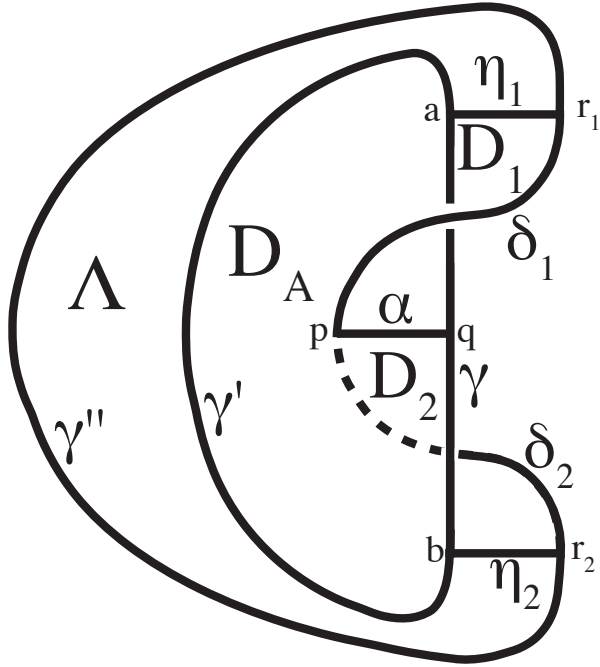}
\caption{First local picture near $\alpha$.}\label{f_alpha1}
\end{figure}

In addition, we choose a standard 3-ball neighborhood around $D_A$, $D_1$, and $D_2$ and a contactomorphism $f$ to $(B^3,\xi_{std}\rvert_{B^3})$ that will be useful later.

Since $\alpha$ is the only intersection between $\Sigma_-$ and $\Sigma_+$, assume that $\sigma_{_{\Sigma_-}}$ and $\sigma_{_{\Sigma_+}}$ coincide along $K_-$ and $J$ away from $\alpha$, in particular, assume that the two trivializations coincide along $K_-\setminus\gamma$ and $J\setminus\delta$.

By construction, 
$$(\ast)\ \ \widetilde{r}_{\Sigma_-}(K_-,J)=w_{\sigma_{_{\Sigma_-}}}(v_{K_-})-w_{\sigma_{_{\Sigma_-}}}(v_J)=w_{\sigma_{_{\Sigma_-}}}(v_\gamma)-w_{\sigma_{_{\Sigma_-}}}(v_\delta)$$

Now let $K_1=\gamma\cup\gamma'=\partial D_A$, let $K_2=\gamma''\cup\delta$, where $\Lambda$ be an annulus with Legendrian boundary $K_1\cup K_2=\partial\Lambda$. Then $f(K_1)=\partial f(D_A)$ and $f(K_1)\cup f(K_2)=\partial f(\Lambda)$, so Remark \ref{rotnumber} implies 
$$w_{\sigma_{D_A}}(v_{K_1})=w_{\sigma_{_{f(D_A)}}}(v_{f(K_1)})=w_{\sigma_{_{f(\Lambda)}}}(v_{f(K_1)})-f(K_2)\cdot f(D_A),$$  
where $f(K_2)\cdot f(D_A)=f(\delta)\cdot f(D_A)=\delta\cdot D_A=J\cdot D_A$. Since
$$w_{\sigma_{_{f(D_A)}}}(v_{f(K_1)})-w_{\sigma_{_{f(\Lambda)}}}(v_{f(K_2)})=w_{\sigma_{_{D_A}}}(v_{K_1})-w_{\sigma_{_{\Lambda}}}(v_{K_2}),$$
we obtain
$$w_{\sigma_{_{D_A}}}(v_{K_1})-w_{\sigma_{_{\Lambda}}}(v_{K_2})=-J\cdot D_A.$$
Observe that we can assume that the trivializations $\sigma_{_{D_A}}$ and $\sigma_{_{\Lambda}}$ coincide along $\gamma'$ (with opposite sign), and that we can construct $\Lambda$ so that $\gamma'$ and $\gamma''$ have equal (and opposite) contribution to the winding number of their respective boundary components. Therefore,
$$w_{\sigma_{_{D_A}}}(v_{K_1})-w_{\sigma_{_{\Lambda}}}(v_{K_2})=w_{\sigma_{_{D_A}}}(v_\gamma)-w_{\sigma_{_{\Lambda}}}(v_\gamma)=-J\cdot D_A.$$
By construction, $w_{\sigma_{_{\Lambda}}}(v_\gamma)=w_{\sigma_{_{\Sigma_-}}}(v_\gamma)$ and (with appropriate orientations)
$$w_{\sigma_{_{D_A}}}(v_\gamma)=w_{\sigma_{_{D_A}}}(v_{\gamma'})=w_{\sigma_{_{A}}}(v_{K_-})=w_{\sigma_{_{A}}}(v_{K_+})=w_{\sigma_{_{\Sigma_+}}}(v_{K_+}).$$
Thus,
$$w_{\sigma_{_{\Sigma_+}}}(v_{K_+})-w_{\sigma_{_{\Sigma_-}}}(v_\gamma)=-J\cdot D_A.$$
So in $(\ast)$, this gives
$$(\ast)\ \ \widetilde{r}_{\Sigma_-}(K_-,J)=w_{\sigma_{_{\Sigma_-}}}(v_{K_-})-w_{\sigma_{_{\Sigma_-}}}(v_J)$$
$$(\ast\ast)\ \ =w_{\sigma_{_{\Sigma_+}}}(v_{K_+})+J\cdot D_A-w_{\sigma_{_{\Sigma_-}}}(v_\delta)$$

Now to compute the term $w_{\sigma_{_{\Sigma_-}}}(v_\delta)$, we ``invert" the picture in Figure \ref{f_alpha1}, see Figure \ref{f_alpha2}. Consider the Legendrian knots $L_1=\eta_1\cup\delta\cup\eta_2\cup\gamma$ and $L_2=\eta_1\cup\delta\cup\eta_2\cup\gamma'$, then $L_1=\partial D_-$ and let $L_2=\partial D_+$ for a 2-disc $D_+$. Observe that $D_\pm$ can be isotoped so that they coincide along $\eta_i$ and so that $D_-$ and $D_A$ coincide along $\gamma'$. Also note that $w_{\sigma_{_{\Sigma_-}}}(v_{\delta})=w_{\sigma_{_{D_-}}}(v_{\delta})$.

Take a 2-disc $D_\zeta$ with corners in the complement of $D_+$  so that $\partial D_\zeta$ is Legendrian and $\partial D_\zeta=\eta_1\cup \zeta_1\cup\eta_2\cup \zeta_2$, so that it extends $D_+$ to an oriented embedded annulus $Z$ with Legendrian boundary components $\delta\cup\zeta_1$ and $\gamma'\cup\zeta_2$. Choose $D_\zeta$ so that $w_{\sigma_{_{D_\zeta}}}(v_{\zeta_1})=w_{\sigma_{_{D_\zeta}}}(v_{\zeta_2})$. Consider points $c_1,c_2\in\eta_1$ and $d_1,d_2\in\eta_2$ and take two parallel Legendrian copies $\zeta_1'$ and $\zeta_2'$ with $\partial\zeta_i'=\{c_i,d_i\}$ and such that $w_{\sigma_{_{D_\zeta}}}(v_{\zeta_1'})=w_{\sigma_{_{D_\zeta}}}(v_{\zeta_2'})=w_{\sigma_{_{D_\zeta}}}(v_{\zeta_i})$.

Now take a push-off $\delta_-$ of $\delta$ into $D_-$ and a push-off $\delta_+$ of $\delta$ into $D_+$ such that $\partial\delta_-=\{c_2,d_2\}$ and $\partial\delta_+=\{c_1,d_1\}$. Isotop $\delta_\pm$ rel boundary to make them Legendrian and let $K_1=\zeta_1'\cup\delta_+$ and $K_2=\zeta_2'\cup\delta_-$. Moreover, choose $\delta_\pm$ so that $w_{\sigma_{_{D_\pm}}}(v_{\delta_\pm})=w_{\sigma_{_{D_\pm}}}(v_\delta)$. Note that we have freedom to adjust the points $c_i$ and $d_i$ in order to ensure that the arcs satisfy these properties. 

So now we have two Legendrian unknots $K_1$ and $K_2$, and notice that $K_1$ bounds a 2-disc $D_1$ which can be isotoped in the interior to coincide with $D_+$ near $\delta$ and to coincide with $D_\zeta$ near $\zeta_1$, while $K_2$ bounds a 2-disc $D_2$ which can be isotoped in the interior to coincide with $D_\zeta$ near $\zeta_2\subset K_2$ and to coincide with $D_-$ near $\delta_-\subset K_2$. Observe that we can isotop the interior of $D_1$ so that it intersects with $D_-$ precisely along the arc $\alpha$ since $K_2$ necessarily intersects $D_1$ along the segment $\delta_-\subset K_2$.

\begin{figure}[!ht]
\includegraphics[scale=.5]{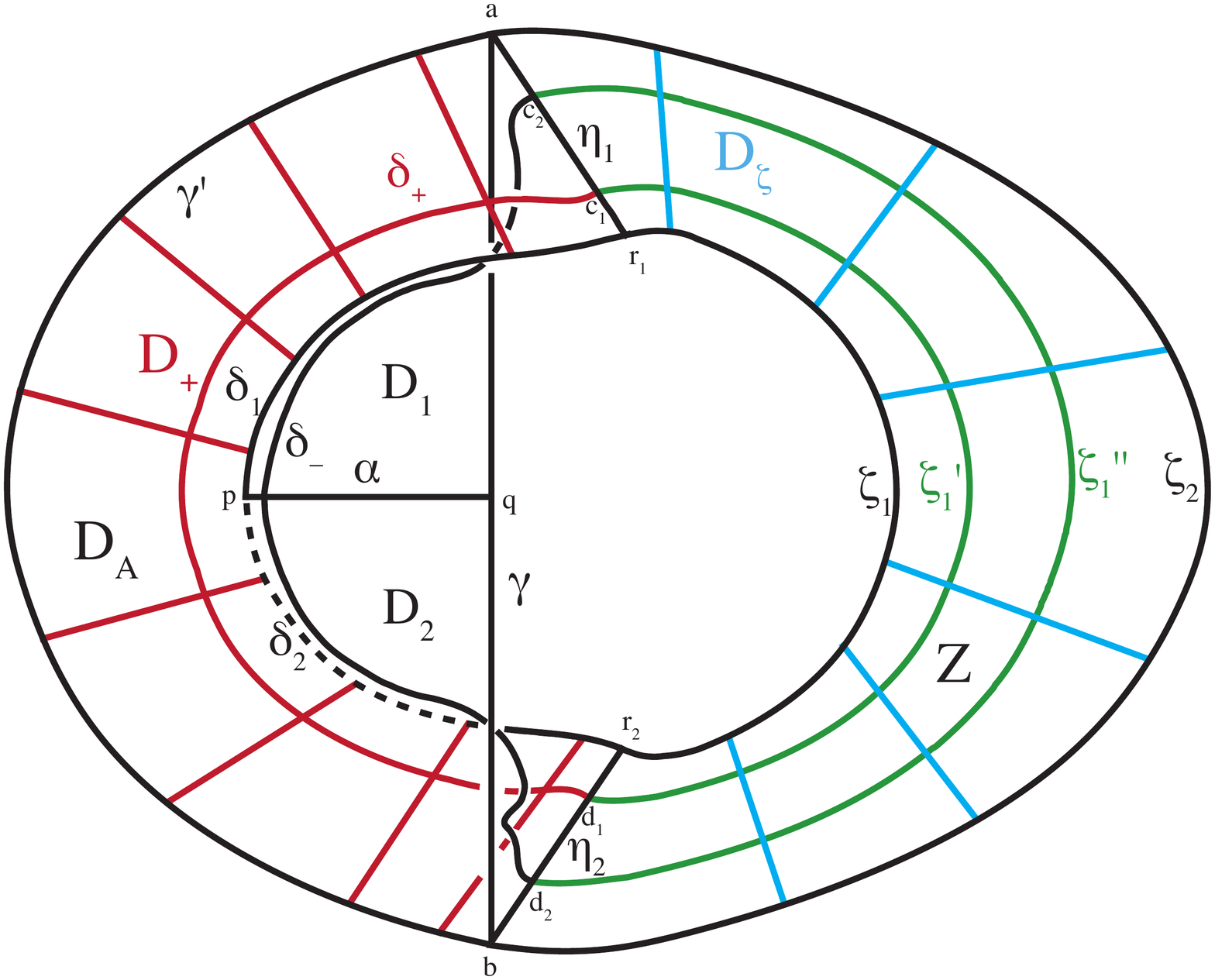}
\caption{Second local picture near $\alpha$.}\label{f_alpha2}
\end{figure}

In this setup, Remark \ref{rotnumber} applies to $K_1$ and $K_2$ to give
$$w_{\sigma_{_{D_1}}}(v_{K_1})=w_{\sigma_{_{D_2}}}(v_{K_2})-K_2\cdot D_1,$$
equivalently,
$$w_{\sigma_{_{D_1}}}(v_{K_1})-w_{\sigma_{_{D_2}}}(v_{K_2})=-K_2\cdot D_1,$$
which, after subtracting the equal contributions along the $\zeta_i'$, equals
$$w_{\sigma_{_{D_1}}}(v_{\delta_+})-w_{\sigma_{_{D_2}}}(v_{\delta_-})=-K_2\cdot D_1,$$
equivalently,
$$w_{\sigma_{_{D_+}}}(v_{\delta_+})-w_{\sigma_{_{D_-}}}(v_{\delta_-})=-K_2\cdot D_1.$$

Note that 
$$w_{\sigma_{_{D_-}}}(v_{\delta_-})=w_{\sigma_{_{\Sigma_-}}}(v_{\delta_-})=w_{\sigma_{_{\Sigma_-}}}(v_\delta),$$
so 
$$w_{\sigma_{_{\Sigma_-}}}(v_\delta)=w_{\sigma_{_{D_+}}}(v_{\delta_+})+K_2\cdot D_1.$$

By construction, 
$$w_{\sigma_{_{D_+}}}(v_{\delta_+})=w_{\sigma_{_{\Sigma_+}}}(v_{\delta_+})=w_{\sigma_{_{\Sigma_+}}}(v_\delta)=w_{\sigma_{_{\Sigma_+}}}(v_J).$$

So we have that 
$$w_{\sigma_{_{\Sigma_-}}}(v_\delta)=w_{\sigma_{_{\Sigma_+}}}(v_J)+K_2\cdot D_1.$$

Now consider the term $K_2\cdot D_1$. Look back at the 2-disc $D_A$ with $\partial D_A=\gamma\cup\gamma'$. Since the 2-disc $D_1$ coincides with $D_+$ near $\delta$, and $\alpha\subset D_+$, we can isotop the interior of $D_+$ away from $\alpha$ and $\delta$ so that $D_A\cap D_1=\alpha'$, a clasp arc such that $\alpha\subset\alpha'$.

In order to keep consistent orientations, we will need to use the discs $D_A$, $D_-$, $D_+$, and $D_1$, in addition to the annulus $\Lambda$ that $K_1$ and $K_2$ co-bound, note that $\Lambda\subset D_-\cup D_\zeta$. First, $D_A$ is given an orientation coming from $A$, and $D_+$ is given the same orientation, since it coincides with $D_A$ near $\gamma'\subset\partial D_A\cap\partial D_+$. On the other hand, by construction, we orient $D_-$ so that its common boundary segment $\gamma$ with $D_A$ receives opposite orientations as a boundary to each disc. Also, $D_1$ is oriented consistently with $D_+$ so that the strip between $\delta+$ and $\delta$ along which the two discs coincide is oriented the same way from each 2-disc. Then the annulus $\Lambda$ is oriented so that $K_1$ receives opposite orientations as an element of $\partial\Lambda$ and $\partial D_1$. This gives an orientation on $K_2$ and a sign to $K_2\cdot D_2$. Reversing the orientation on $D_A$ and carrying out this check reveals that the sign $J\cdot D_A$ does not change and the sign of $K_2\cdot D_1$ also does not change. Moreover, for each choice of orientations, an easy check reveals that $J\cdot D_A=K_2\cdot D_1$, so these two numbers are equal.  Therefore,  and we obtain
$$w_{\sigma_{_{\Sigma_-}}}(v_\delta)=w_{\sigma_{_{\Sigma_+}}}(v_J)+J\cdot D_A.$$

Then substituting in $(\ast)$ and $(\ast\ast)$, we have
$$\widetilde{r}_{\Sigma_-}(K_-,J)=w_{\sigma_{_{\Sigma_+}}}(v_{K_+})+J\cdot D_A-w_{\sigma_{_{\Sigma_-}}}(v_\delta)$$
$$=w_{\sigma_{_{\Sigma_+}}}(v_{K_+})+J\cdot D_A-(w_{\sigma_{_{\Sigma_+}}}(v_J)+J\cdot D_A)$$
$$=w_{\sigma_{_{\Sigma_+}}}(v_{K_+})-(w_{\sigma_{_{\Sigma_+}}}(v_J)$$
$$=\widetilde{r}_{\Sigma_+}(K_+,J).$$

Therefore, the construction of the new surface after a Legendrian isotopy intersects the ``reference knot" dies not change the relative rotation numbers.
\end{proof}

\begin{proof}[Proof of Theorem \ref{legendriantransverseinvariance}(c)] 
We start with the equivalent of Remark \ref{rotnumber} in the transverse category.
\begin{remark}\label{slnumber}
Consider a transverse unknot $K_1\subset(B^3,\xi_{std}\rvert_{B^3})$ in standard contact 3-ball around the origin in $(\R^3,\xi_{std}=\ker(dz-ydx))$. We can compute the self-linking number of $K_1$ with respect to the global trivialization of $\xi_{std}\rvert_{B^3}$ or with respect to a trivialization of $\xi_{std}$ over a Seifert surface (in this case, a 2-disc) for $K_1$, in which case we take a non-zero section of the trivialization and measure the linking number with $K_1$ of a push-off of $K_1$ under this section. We set some notation, let $K^{surface}$ denote the push-off of a non-zero section of $\xi\rvert_{surface}$, so, in particular, for $K_1=\partial D_1$, we have $sl_{D_1}(K_1)=lk(K^{D_1}_1,K_1)=K^{D_1}_1\cdot D_1$. Now take another oriented transverse knot $K_2$ which co-bounds an annulus $\Lambda$ with $K_1$. Orient $\Lambda$ so that the orientation that $K_1$ inherits as its boundary is the opposite to the one it inherits as boundary of $D$. Then
$$K_1^{D_1}\cdot D_1=K_1^{\Lambda}\cdot D_1-K_2\cdot D_1.$$

\end{remark}

In the case of transverse knots and a transverse isotopy between $K_-$ and $K_+$, with the setup in Lemma \ref{construction}, we need to prove $\widetilde{sl}_{\Sigma_+}(K_+,J)=\widetilde{sl}_{\Sigma_-}(K_-,J)$, which by definition is equivalent to
$$K_+^{\Sigma_+}\cdot\Sigma_+-J^{\Sigma_+}\cdot\Sigma_+=K_-^{\Sigma_-}\cdot\Sigma_--J^{\Sigma_-}\cdot\Sigma_-,$$
or,
$$K_+^{\Sigma_+}\cdot\Sigma_+-K_-^{\Sigma_-}\cdot\Sigma_-=J^{\Sigma_+}\cdot\Sigma_+-J^{\Sigma_-}\cdot\Sigma_-.$$

Again we consider the cases when resolving the arcs $\beta_i$ and the arc $\alpha$ separately. By Remark \ref{slnumber} above, the arguments follow exactly the argument in the proof of part (b). We will use the figures and labels from part (b). 

We consider a local picture near $\beta_i$ and reduce the discussion to computing the local contributions to the self-linking numbers. This is justified by working with the characteristic foliation and arranging that away from a local 2-dsic neighborhood of the intersection arcs on each surface, the contribution to the self-linking difference is zero. Use the fact that self-linking is additive under relative connected sums (see \cite{georgi}). We provide some more detail of the setup.

Again, consider the setup in Figure \ref{f_beta}, choose the non-zero section of $\xi\rvert_{A}$ and the arcs $\gamma_i$ so that along $\gamma_i$, we have no contribution to the quantity $(\gamma_1\cup\eta_-\cup\gamma_2\cup\eta_+)^{D_A}\cdot D_A$. Similarly, choose the endpoints of the arcs $\eta_\pm$ so that they have equal (and opposite in sign) contributions to $(\gamma_1\cup\eta_-\cup\gamma_2\cup\eta_+)^{D_A}\cdot D_A$. Note that this is possible since we know that $K_+^{A}\cdot A=-K_-^{A}\cdot A$. Do the same for the arcs $\delta_i$ and choose the arc $\zeta$ so that $(\delta_1\cup\eta_-\cup\delta_2\cup\zeta)^{D_-}\cdot D_-=0$.

Resolving $\beta_i$ is a local modification away from $J$, so $J^{\Sigma_+}\cdot\Sigma_+-J^{\Sigma_-}\cdot\Sigma_-=0$. We claim that $K_+^{\Sigma_+}\cdot\Sigma_+-K_-^{\Sigma_-}\cdot\Sigma_-$ is also equal to zero in an appropriate sense computed locally near $\beta_i$. Consider
$$K_+^{\Sigma_+}\cdot\Sigma_+-K_-^{\Sigma_-}\cdot\Sigma_-=K_+^{\Sigma_+}\cdot\Sigma_+-K_-^A\cdot A+K_-^A\cdot A-K_-^{\Sigma_-}\cdot\Sigma_-$$
and recall that $K_-^A\cdot A=K_+^A\cdot A$, so the above expression equals
$$(\ast)\ \ K_+^{\Sigma_+}\cdot\Sigma_+-K_+^A\cdot A+K_-^A\cdot A-K_-^{\Sigma_-}\cdot\Sigma_-.$$

Observe that $K_+^{\Sigma_+}\cdot\Sigma_+=K_+^{\Sigma_+}\cdot A+K_+'\cdot \Sigma_+$, where $K_+'$ is a push-off of $K_+$ into $A$, and, similarly, $K_-^{\Sigma_-}\cdot\Sigma_-=K_-^{\Sigma_-}\cdot A+K_-'\cdot \Sigma_-$, where $K_-'$ is a push-off of $K_-$ into $A$. Therefore, $(\ast)$ becomes
 $$K_+^{\Sigma_+}\cdot A+K_+'\cdot \Sigma_+-K_+^A\cdot A+K_-^A\cdot A-K_-^{\Sigma_-}\cdot A-K_-'\cdot \Sigma_-,$$
rearranging terms, we obtain
$$K_+^{\Sigma_+}\cdot A-K_+^A\cdot A+K_+'\cdot \Sigma_+-(K_-^{\Sigma_-}\cdot A-K_-^A\cdot A+K_-'\cdot \Sigma_-).$$
By the local construction in part (b) for the case for $\beta_i$, Remark \ref{slnumber} yields that $K_+^{\Sigma_+}\cdot A-K_+^A\cdot A=-K_+'\cdot \Sigma_+$ and $K_-^{\Sigma_-}\cdot A-K_-^A\cdot A=-K_-'\cdot\Sigma_-$. So the quantity $(\ast)$ is equal to zero.

Now for resolving $\alpha$, we again look at the expression 
$$K_+^{\Sigma_+}\cdot\Sigma_+-K_-^{\Sigma_-}\cdot\Sigma_-=J^{\Sigma_+}\cdot\Sigma_+-J^{\Sigma_-}\cdot\Sigma_-.$$

Use the setup and terminology from Figure \ref{f_alpha2} to conclude that $J^{\Sigma_+}\cdot\Sigma_+-J^{\Sigma_-}\cdot\Sigma_-=K_+^{\Sigma_+}\cdot\Sigma_+-K_-^{\Sigma_-}\cdot\Sigma_-=J\cdot D_A$.
\end{proof}

Together with Lemma \ref{construction}, Lemmas \ref{independent}, \ref{independent2}, and \ref{independent3} imply that the relative Thurston-Bennequin number and relative rotation number are Legendrian isotopy invariants, and the relative self-linking number is a transverse isotopy invariant, even when the isotopy of $K$ intersects the ``reference knot" $J$.

\section{A Few Remarks}

In the case when our contact manifold $(M,\xi)$ is tight, we can use convex surface theory and the characteristic foliation of the surfaces (see \cite{etnyre:intro, giroux:convex}) to compute the relative invariants and apply them to relative connected sums and classifications (see \cite{georgi}).

In \cite{chantraine}, Chantraine looked at Legendrian knots that are cobordant via a Lagrangian cylinder in and conjectures an explicit formula which essentially gives the relative Thurston-Bennequin invariant in this construction.

In \cite{georgi2}, we study the more general setup of cobordant Legendrian knots with no restrictions on the embedded surface in the cobordism, and prove a relative slice genus bound. 

\section{Relative Framings of Transverse Knots}

In \cite{ch1, ch2}, Chernov defines and proves well-definedness of relative self-linking numbers using homotopy methods and transverse homotopy instead of isotopy. This requires certain assumptions on the contact structure of the 3-manifold (tightness, or, equivalently, coorientability). The relative Thurston-Bennequin invariant and the relative self-linking number we have defined here behave like the affine self-linking invariant in Theorem 2.0.2. parts 1 and 2a in \cite{ch2}.

\end{document}